\documentclass[conference]{IEEEtran}
\IEEEoverridecommandlockouts

\usepackage[T1]{fontenc}
\usepackage[latin9]{inputenc}
\usepackage[compress]{natbib}
\usepackage{amsthm}
\usepackage{amsmath}
\usepackage{mathdots}
\usepackage{algcompatible}
\usepackage{algorithm}
\usepackage{booktabs}
\usepackage{enumerate}
\usepackage{graphicx}
\usepackage{amssymb}
\usepackage{latexsym}
\usepackage{epstopdf}
\usepackage{color}
\usepackage{bbm}
\usepackage{needspace} 
\usepackage{color, colortbl}
\usepackage[english]{babel}
\usepackage{tikz}
\usetikzlibrary{calc,fit,matrix,arrows,automata,positioning}

\usepackage{caption}
\usepackage{subcaption}
\usetikzlibrary{arrows,automata}
\usetikzlibrary{positioning}
\usepackage{filecontents}
\setcitestyle{numbers}
\DeclareCaptionFont{mysize}{\fontsize{8}{9.6}\selectfont}
\captionsetup{font=mysize}

\pagenumbering{gobble}
%
\ifCLASSINFOpdf
\else
\fi
\setcitestyle{square}
\addtolength{\textwidth}{-6mm}
\addtolength{\hoffset}{3mm}
\addtolength{\textheight}{-0mm}
\addtolength{\voffset}{4mm}
\theoremstyle{plain}
\newtheorem{thm}{Theorem}
\theoremstyle{plain}
\newtheorem{lem}{Lemma}
\newtheorem{prop}{Proposition}
\newtheorem{remark}{Remark}
\newtheorem{coro}{Corollary}

\newtheorem{define}{Definition}
\newtheorem{problem}{Problem}

\newtheorem{assumption}{Assumption}

\makeatother

\usepackage{babel}
\newcommand{\norm}[1]{\left\lVert#1\right\rVert}

\graphicspath{D:/Dropbox/{Structural_controllability_of_undirected_network}/CDC}

\setcitestyle{citesep={,}}

\newcommand{\tikzmark}[1]{\tikz[overlay,remember picture] \node (#1) {};}
\newcommand{\tikzdrawbox}[3][(0pt,0pt)]{%
	\tikz[overlay,remember picture]{
		\draw[#3]
		($(left#2)+(-1.8em,1em) + #1$) rectangle
		($(right#2)+(1.6em,-1em) - #1$);}
}
\begin{document}

\addtolength{\textheight}{0cm}
\addtolength{\voffset}{0.35in}
\title{Structural Target Controllability of Undirected Networks}
\author{Jingqi Li, Ximing Chen, S\'{e}rgio Pequito, George J. Pappas, and Victor M. Preciado%
\thanks{Jingqi Li, Ximing Chen, George J. Pappas and Victor M. Preciado are with the Department of Electrical and Systems Engineering
at the University of Pennsylvania, Philadelphia, PA 19104.  e-mail: \{jingqili,ximingch,pappasg,preciado\}@seas.upenn.edu. S{\'e}rgio Pequito is with the Department of Industrial and Systems Engineering at the Rensselaer Polytechnic Institute, Troy, NY 12180-3590. email: goncas@rpi.edu.%
}}
\maketitle
\begin{abstract}
In this paper, we study the \emph{target controllability problem} of networked dynamical systems, in which we are tasked to steer a subset of network states towards a desired objective.
More specifically, we derive necessary and sufficient conditions for the \emph{structural target controllability problem} of linear time-invariant (LTI) systems with symmetric state matrices, such as undirected dynamical networks with unknown link weights.
To achieve our goal, we first characterize the generic rank of \emph{symmetrically structured matrices}, as well as the modes of any numerical realization.
Subsequently, we provide a graph-theoretic necessary and sufficient condition for the structural controllability of undirected networks with multiple control nodes, for which there are no previous results in the literature.
Finally, we derive a graph-theoretic necessary and sufficient condition for structural target controllability of undirected networks. Remarkably, apart from the standard reachability condition, only local topological information is needed for the verification of structural target controllability.
\end{abstract}
	
\section{Introduction}\label{sec:intro}
Complex networks have been shown to be a powerful tool for modeling dynamical systems~\cite{liu2011controllability,cowan2012nodal,pasqualetti2014controllability}. In particular, when analyzing and designing networked systems, it is crucial to verify their controllability, i.e., a property ensuring the existence of an input sequence allowing us to drive the states of the system towards arbitrary states within finite time. Nonetheless, verifying such a property requires full knowledge of the parameters describing the system's dynamics~\cite{kalman1963mathematical}. In applications involving large-scale networks, those parameters are difficult, or even impossible, to obtain~\cite{barco2009automatic}. Alternatively, it is more viable to identify the presence of dynamical interconnections among the states of a network. Subsequently, it is of interest to analyze system properties such as controllability using topological information of the system dynamics, which led to the development of system analysis tools using graph theory \cite{murota2012systems}.

Seminal work on graph-theoretic analysis of controllability can be found in~\cite{lin1974structural}, in which the notion of \emph{structural controllability} was stated. Following this seminal work, the authors in \cite{shields1976structural,glover1976characterization,hosoe1979irreducibility,hosoe1980determination} provided necessary and sufficient conditions for structural controllability of multi-input linear time-invariant (LTI) systems using various graph-theoretic notions. Nonetheless, existing results on structural controllability assumed implicitly that the parameters are either fixed zeros or independent free variables. Such an assumption is often violated in practical scenarios, for instance, when the system is characterized by undirected networks \cite{myers2014information}, or when different interconnections in the system are strongly correlated \cite{pagani2013power}. Consequently, it is of interest to provide necessary and sufficient conditions for structural systems characterized by graph with special weight constraints. Such problems are considered in \cite{corfmat1976structurally} and \cite{anderson1982structural}. However, the result in \cite{corfmat1976structurally} is not applicable to systems modeled by undirected graph, whereas the matrix net approach in \cite{anderson1982structural} may suffer from computational complexity in large-scale systems. Recently, the authors in \cite{menara2017structural} and \cite{mousavi2017structural} proposed graph-theoretic necessary and sufficient conditions for structural controllability of dynamical systems modeled by symmetric graph. Different from their approaches, in this paper, we provide a full characterization of the controllable modes using structural information of an undirected network, which facilitates a deeper understanding of structural controllability for systems involving symmetric parameter constraints.

However, in certain scenarios, we are only concerned about our ability to steer a collection of states, which can be captured by the notion of \emph{(structural) target controllability} \cite{gao2014target,czeizler2018structural}. The target controllability problem is a particular case of output controllability problem \cite{petre2016target}, while the necessary and sufficient condition of structural output controllability is still unknown \cite{murota1990note}. 
Recently, the authors in \cite{monshizadeh2015strong,van2017distance} proposed conditions for strong target controllability using zero-forcing sets. Nonetheless, to the best of our knowledge, providing necessary and sufficient conditions for structural target controllability 
remains an unsolved problem.

In this paper, we derive necessary and sufficient conditions for the problem of structural target controllability of LTI systems with symmetric state matrices, such as undirected dynamical networks with unknown link weights. Our contribution is three-fold: First, we introduce the concept of symmetrically structured matrix. We then characterize the generic rank of symmetrically structured matrices, as well as generic spectral properties of any numerical realization. Secondly, we propose graph-theoretic necessary and sufficient conditions for structural controllability of undirected networks with multiple inputs. Finally, we derive a necessary and sufficient condition for structural target controllability of undirected networks.


The rest of the paper is organized as follows. In Section~\ref{sec:prelim}, we introduce preliminaries in algebra and graph theory. We formulate the problem under consideration in Section~\ref{sec:problem}. In Section~\ref{sec:main}, we present our main results. 
In Section~\ref{sec:example}, we present an example to illustrate our results. Finally, we conclude the paper in Section~\ref{sec:con}. All the proofs are included in the Appendix.

\addtolength{\textheight}{0cm}
\addtolength{\voffset}{-0.25in}
\section{Notation and Preliminaries}\label{sec:prelim}
We denote the cardinality of a set $\mathcal{S}$
by $\left|\mathcal{S}\right|$. We adopt the notation $[n]$ to represent the set of integers $\{1,\ldots,n\}.$ Let $\mathbf{0}_{n\times m}\in\mathbb{R}^{n\times m}$ be the matrix with all entries equals to zero. Whenever clear from the context, $\mathbf{0}_{n\times m}$ is abbreviated as~$\mathbf{0}$. 

Given $M_1\in\mathbb{R}^{n\times m_1}$ and $M_2\in\mathbb{R}^{n\times m_2}$, we let $[M_1,M_2] \in \mathbb{R}^{n\times (m_1 + m_2)}$ be the concatenation of $M_1$ and $M_2$. The $ij$-th entry of $M\in\mathbb{R}^{n\times n}$ is denoted by $[M]_{ij}.$ Moreover, we let $[M]_{i_1,\dots,i_k}^{j_1,\dots, j_k}$ be the $k\times k$ submatrix of $M$ formed by collecting $i_1,\dots,i_k$-th rows and $j_1,\dots,j_k$-th columns of $M$. The determinant of a matrix $M\in \mathbb{R}^{n\times n}$ is defined by the expansion: 
	\begin{equation}\label{eq:detexpand}\small
	\det M=\sum_{\sigma\in \mathcal{S}_n}\left(\textrm{sgn}(\sigma)\prod_{i=1}^n \left[M\right]_{i\sigma(i)}\right),
	\end{equation}
	where $\mathcal{S}_n$ is the set of all permutations of $\{1,\dots,n\}$, and $\text{sgn}(\sigma)$ is the \emph{signature}\footnote{The signature of a permutation equals to $1$ if $|\{(x,y): x<y, \sigma(x) > \sigma(y)\}|$ is even, and $-1$ otherwise.} of a permutation $\sigma\in \mathcal{S}_n$. 
	
A matrix $\bar{M}\in\{0,\star\}^{n\times m}$ is called a \emph{structured matrix}, if $[\bar{M}]_{ij}$ is either a fixed zero or an independent free parameter denoted by $\star.$ In particular, we define a matrix $\bar{M}\in\{0,\star\}^{n\times n}$ to be \emph{symmetrically structured}, if the value of the free parameter associated with $[\bar{M}]_{ij}$ is constrained to be the same as the value of the free parameter associated with $[\bar{M}]_{ji}$
, for all $j$ and $i$. For example, consider $\bar{M}$ and $\bar{A}$ be specified by
\begin{equation*}
	\bar{M}=\begin{bmatrix}
	0 & m_{12} \\
	m_{21} & m_{22}\\
	\end{bmatrix}\text{ and } \bar{A}=\begin{bmatrix}
	0& a_{12} \\
	a_{12} & a_{22}\\
	\end{bmatrix},
\end{equation*}
	where $m_{12},m_{21},m_{22}$ and $a_{12},a_{22}$ are independent parameters. In this case, $\bar{M}$ is a structured matrix whereas $\bar A$ is symmetrically structured.


In the rest of the paper, we refer to $\tilde{M}$ as a \emph{numerical realization} of a (symmetrically) structured matrix $\bar{M}$, i.e., $\tilde{M}$ is a matrix obtained by independently assigning real numbers to each independent free parameter in $\bar{M}$. {In addition, we say that the structured matrix $\bar{M}\in\{0,\star\}^{n\times m}$ is the \emph{structural pattern} of the matrix $M\in\mathbb{R}^{n\times m}$, where $[\bar{M}]_{ij}=\star$ if and only if $[M]_{ij}\ne 0$, for $\forall i\in[n], \forall j\in[m]$.} 
	
Given a (symmetrically) structured matrix $\bar{M}$, we let $n_{\bar{M}}$ be the number of its independent free parameters and we associate with $\bar{M}$ a parameter space $\mathbb{R}^{n_{\bar{M}}}$. Furthermore, we use vector $\mathbf{p}_{\tilde{M}}=(p_1,\dots,p_{n_{\bar{M}}})^\top\in\mathbb{R}^{n_{\bar{M}}}$ to encode the value of independent free entries of $\bar{M}$ in a numerical realization $\tilde{M}$. 

In what follows, a set $V\subseteq\mathbb{R}^n$ is called a \emph{variety} if there exist polynomials $\varphi_1,\dots,\varphi_k$, such that $V=\{x\in\mathbb{R}^n\colon \varphi_i(x)=0,\forall i\in\{1,\dots,k\}\}$, and $V$ is a \emph{proper variety} when~$V\ne \mathbb{R}^n$. We denote by $V^c:=\mathbb{R}^n\setminus V$ its complement.
	
	{
		The \emph{term rank} \cite{murota2012systems} of a (symmetrically) structured matrix $\bar{M}$, denoted as $\textrm{t--rank}(\bar{M})$, is the largest integer $k$ such that, for some suitably chosen distinct rows $i_1,\dots,i_k$ and distinct columns $j_1,\dots,j_k$, all of the entries $\{ [\bar{M}]_{i_\ell j_\ell} \}_{\ell = 1}^k$ are $\star$-entries. Additionally, a (symmetrically) structured matrix $\bar{M}\in\{0,\star\}^{n\times m}$ is said to have \emph{generic rank} $k$, denoted as $\textrm{g--rank}(\bar{M})=k$, if there exists a numerical realization $\tilde{M}$ of $\bar{M}$, such that $\textrm{rank}(\tilde{M})=k$. If $\textrm{g--rank}(\bar{M})>0$, it is worth noting that the set of parameters describing all possible realizations forms a proper variety when $\textrm{rank}(\tilde{M})<\textrm{g--rank}(\bar{M})$ \cite{hosoe1980determination}. 
		
	

In the rest of the paper, we let $\mathcal{D}=\left(\mathcal{V},\mathcal{E}\right)$ denote a directed graph whose vertex-set and edge-set are denoted by $\mathcal{V}=\{v_1,\ldots,v_n\}$ and $\mathcal{E}\subseteq \mathcal{V}\times\mathcal{V},$ respectively. A \emph{path} $\mathcal{P}$ in $\mathcal{D}$ is defined as an ordered sequence of distinct vertices $\mathcal{P}=(v_1,\ldots,v_k)$ with $\{v_1,\dots,v_k\}\subseteq \mathcal{V}$ and $(v_{i}, v_{i+1})\in \mathcal{E}$ for all $i=1,\ldots,k-1$. A \emph{cycle} is either a path $(v_1,\ldots,v_k)$ with an additional edge $(v_k,v_1)$ (denoted as $\mathcal{C}=(v_1,\dots,v_k,v_1)$), or a vertex with an edge to itself (i.e., self-loop, denoted as cycle $\mathcal{C}=(v_1,v_1)$). We denote by $\mathcal{V_{C}}\subseteq\mathcal{V}$ the set of vertices in $\mathcal{C}$, and $\mathcal{E_C}\subseteq\mathcal{E}$ the set of edges in $\mathcal{C}$. The length of a cycle $\mathcal{C}$, is defined as the number of distinct vertices in $\mathcal{C}$, and is denoted by $\left|\mathcal{C}\right|$. Given a set $\mathcal S$ of vertices in $\mathcal{D},$ we let $\mathcal{D}_{\mathcal S} = (\mathcal S,\mathcal S\times \mathcal S\subset \mathcal E)$ be the \emph{subgraph of $\mathcal{D}$ induced by} $\mathcal{S}.$ We say that $\mathcal{D}_{\mathcal S}$ can be covered by \emph{disjoint cycles} if there exists $\mathcal{C}_1,\dots ,\mathcal{C}_l$, such that $\mathcal{S}=\bigcup_{i=1}^{l}\mathcal{V}_{\mathcal{C}_i}$ and $\mathcal{V}_{\mathcal{C}_i}\cap\mathcal{V}_{\mathcal{C}_j}=\emptyset$, for all $i\ne j$, $i,j\in\{1,\dots,l\}$. Given a set $\mathcal{S}\subseteq\mathcal{V}$, we define the \emph{in-neighbour set} of $\mathcal{S}$ as $\mathcal{N(S)}=\{v_i\in\mathcal{V}|(v_i,v_j)\in\mathcal{E},v_j\in\mathcal{S}\}$. We say a vertex $v_i$ is \emph{reachable} from vertex $v_j$ in $\mathcal{D}(\mathcal{V,E})$, if there exists a path from vertex $v_j$ to vertex $v_i$.

	Given a directed graph $\mathcal{D} = (\mathcal{V}, \mathcal{E})$ and two sets $\mathcal{S}_1, \mathcal{S}_2\subseteq \mathcal{V}$, we define the \emph{bipartite graph} $\mathcal{B}(\mathcal{S}_1, \mathcal{S}_2, \mathcal{E}_{\mathcal{S}_1, \mathcal{S}_2})$ as an undirected graph, whose vertex set is $\mathcal{S}_1\cup \mathcal{S}_2$ and edge set\footnote{We denote undirected edges using curly brackets $\{v_i,v_j\}$, in contrast with directed edges, for which we use parenthesis.} $\mathcal{E}_{\mathcal{S}_1, \mathcal{S}_2} = \{\{s_1, s_2\}\colon(s_1,s_2)\in\mathcal{E},  s_1\in \mathcal{S}_1, s_2\in \mathcal{S}_2\}.$ Given $\mathcal{B}(\mathcal{S}_1, \mathcal{S}_2, \mathcal{E}_{\mathcal{S}_1, \mathcal{S}_2})$, and a set $\mathcal{S}\subseteq\mathcal{S}_1$ or $\mathcal{S}\subseteq\mathcal{S}_2$, we define \emph{bipartite neighbor set} of $\mathcal{S}$ as $\mathcal{N_B(S)}=\{j\colon\{j,i\}\in\mathcal{E}_{\mathcal{S}_1,\mathcal{S}_2},i\in\mathcal{S}\}$. A \emph{matching} $\mathcal{M}$ is a set of edges in $\mathcal{E}_{\mathcal{S}_1, \mathcal{S}_2}$ that do not share vertices, i.e., given edges $e = \{s_1, s_2\}$ and $e^\prime = \{s_1^\prime, s_2^\prime\}$, $e,e^\prime \in \mathcal{M}$ only if $s_1 \neq s_1^\prime$ and $s_2 \neq s_2^\prime.$ 
	The vertex $v$ is said to be \emph{right-unmatched} with respect to a matching $\mathcal{M}$ associated with $\mathcal{B}(\mathcal{S}_1, \mathcal{S}_2, \mathcal{E}_{\mathcal{S}_1, \mathcal{S}_2})$ if $v \in \mathcal{S}_2$, and $v$ does not belong to an edge in the matching $\mathcal{M}$.

\section{Problem Statements}\label{sec:problem}
We consider a linear time-invariant system whose dynamics is captured by
\begin{equation}\small
\dot{x}=Ax+Bu,\ \ \  y=Cx,
\label{eq:1}
\end{equation}
where $x\in\mathbb{R}^{n}$, $y\in\mathbb{R}^{k}$ and $u\in\mathbb{R}^{m}$ are the state, the output and the input vectors, respectively. In addition, the matrix $A\in\mathbb{R}^{n\times n}$ is the state matrix, $B\in\mathbb{R}^{n\times m}$ is the input matrix and $C\in\mathbb{R}^{k\times n}$ is the output matrix. In this paper, we consider the following assumption:

\begin{assumption}\label{assumption_sym}
The state matrix $A\in\mathbb{R}^{n\times n}$ is symmetric, i.e., $A=A^\top.$ 
\end{assumption}
This symmetry assumption is motivated by control problems arising in undirected networked dynamical systems. Furthermore, this assumption will be crucial when establishing graph-theoretic results characterizing structural controllability problems in undirected networks.

Hereafter, we use the 3-tuple $(A,B,C)$ to represent the system~\eqref{eq:1}. In particular, we use the pair $(A,B)$ to denote a system without a measured output. A pair $(A,B)$ is called \emph{reducible} if there exists a permutation matrix $P$, such that 
	\begin{equation}\small
	PAP^{-1}=\begin{bmatrix}
	A_{11}& \mathbf{0}\\
	A_{21}& A_{22}
	\end{bmatrix},\ \ PB=\begin{bmatrix}
	\mathbf{0} \\
	B_{2}
	\end{bmatrix},
	\end{equation} {where $A_{11}\in\mathbb{R}^{q\times q}$ and $B_{2}\in\mathbb{R}^{(n-q)\times m}$, $1\le q< n$}. The pair $(A,B)$ is called \emph{irreducible} otherwise. Furthermore, we use $\bar{A}$ and $\bar{B}$ to represent the structural pattern of $A$ and $B,$ respectively. In particular, by Assumption~\ref{assumption_sym}, we consider $\bar{A}$ to be symmetrically structured. Thus, $(\bar{A},\bar{B})$ is referred to as the \emph{structural pair} of the system $(A,B).$ 
	Given a structured matrix $\bar{A},$ we associate it with a directed graph $\mathcal{D}(\bar{A})=(\mathcal{X,E_{X,X}}),$ which we refer to as the \emph{state digraph}, where $\mathcal{X}=\{x_1,\dots,x_n\}$ is the state vertex set, and $\mathcal{E_{X,X}}=\{(x_j,x_i):[\bar{A}]_{ij} = \star \}$ is the set of edges. Similarly, we associate a directed graph $\mathcal{D}(\bar{A},\bar{B})=(\mathcal{X}\cup\mathcal{U}, \mathcal{E}_{\mathcal{X},\mathcal{X}}\cup \mathcal{E}_{\mathcal{U},\mathcal{X}})$ with the structural pair $(\bar{A},\bar{B})$, where $\mathcal{U}=\{u_1,\dots,u_m\}$ is the set of input vertices and $\mathcal{E}_{\mathcal{U},\mathcal{X}} = \{(u_j, x_i)\colon [\bar{B}]_{ij} =\star\}$ is the set of edges from input vertices to state vertices. We refer to $\mathcal{D}(\bar{A},\bar{B})$ as the \emph{system digraph}.

\begin{define}[Structural Controllability \cite{lin1974structural}]\label{structuralcontrollability}
A structural pair $(\bar{A},\bar{B})$ is structurally controllable if there exists a numerical realization $(\tilde{A},\tilde{B})$, such that {the controllability matrix $Q(\tilde{A},\tilde{B}):=[\tilde B,\tilde{A}\tilde{B},\dots,\tilde{A}^{n-1}\tilde{B}]$ has full row rank}.
\end{define}

While controllability is concerned about the ability to steer all the states of a system to a desired final state, under certain circumstances, it is more preferred to control the behavior of only a subset of states. More specifically, given a set $\mathcal{T} \subseteq [n]$, which we refer to as the \emph{target set}, it is of interest to consider whether the set of selected states can be steered arbitrarily. If so, we say that the pair $(A,B)$ is \emph{target controllable} with respect to $\mathcal{T}$~\cite{gao2014target}. Notice that this does not exclude the possibility of some other states indexed by $[n]\setminus \mathcal{T}$ being controllable as well. Similarly, we introduce the notion of \emph{structural target controllability} in the context of structural pairs.
 
\begin{define}[Structural Target Controllability \cite{czeizler2018structural}]
Given a structural pair $(\bar{A},\bar{B}),$ and a target set $\mathcal{T} = \{i_1,\dots,i_k\} \subseteq [n],$ let $\mathcal{X_T}$ be the set of state vertices corresponding to $\mathcal{T}$ in $\mathcal{D}(\bar{A},\bar{B})$. We define a matrix $C_{\mathcal{T}}\in\mathbb{R}^{k\times n}$ by
\begin{equation}\label{CT}\small
\left[C_{\mathcal{T}}\right]_{\ell j}= 
 \begin{cases}
 1, & \text{ if } j=i_\ell,\  i_\ell\in \mathcal T,\\
 0, & \text{otherwise.}
  \end{cases}
\end{equation}
The structural pair $(\bar{A},\bar{B})$ is structurally target controllable with respect to $\mathcal{T}$ if there exists a numerical realization $(\tilde{A},\tilde{B})$, such that the target controllability matrix $Q_{\mathcal{T}}(\tilde{A},\tilde{B}):=C_{\mathcal{T}}[\tilde{B},\tilde{A}\tilde{B},\dots,\tilde{A}^{n-1}\tilde{B}]$ has full row rank. 
\end{define}
Note that structural controllability is equivalent to structural target controllability when $\mathcal{T} = [n].$ Therefore, the necessary and sufficient conditions for structurally target controllable undirected networks can be applied to characterize structural controllability. Subsequently, in this paper, we consider the following problem:
\begin{problem}\label{prob}
Given a structural pair $(\bar{A},\bar{B})$, where $\bar{A}$ is symmetrically structured and $\bar{B}$ is a structured matrix, and a target set $\mathcal{T}\subseteq [n]$, find a necessary and sufficient condition for $(\bar{A},\bar{B})$ to be structurally target controllable with respect to~$\mathcal{T}.$
\end{problem}	

\section{Main Results}\label{sec:main}
In this section, we first introduce a proposition that is crucial for developing our solution to Problem~1. Then, we characterize the generic rank of symmetrically structured matrices in Lemma~\ref{symmetric}. Subsequently, we characterize the relationship between the term-rank of a symmetrically structured matrix and the presence of non-zero simple eigenvalues in a numerical realization in Lemma~\ref{lem4}. 
{This allows us to obtain a result characterizing the relationship between irreducibility and structural controllability of a structural pair involving symmetrically structured matrix (see Lemma~\ref{theo2}).} Based on these results, we propose graph-theoretic necessary and sufficient conditions for structural controllability and structural target controllability in Theorems~\ref{theo3} and~\ref{theo4}, respectively. 




	\begin{prop}[Popov-Belevitch-Hautus (PBH) test \cite{kailath1980linear}]\label{pbh}
	The pair $(A,B)$ is uncontrollable if and only if there exists a $\lambda\in\mathbb{C}$ and a nontrivial vector $e\in\mathbb{C}^n$, such that $e^\top A=\lambda e^\top $ and $e^\top B=0$.
\end{prop}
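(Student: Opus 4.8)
The plan is to prove both directions by relating the left null space of the controllability matrix $Q(A,B)=[B,AB,\dots,A^{n-1}B]$ to the left eigenvectors of $A$; the symmetry of $A$ plays no role here, so this is the standard argument valid for arbitrary square $A$.

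For the sufficiency direction ($\Leftarrow$), I would assume there exist $\lambda\in\mathbb{C}$ and a nontrivial $e\in\mathbb{C}^n$ with $e^\top A=\lambda e^\top$ and $e^\top B=0$. A straightforward induction on $k$ using $e^\top A=\lambda e^\top$ yields $e^\top A^k=\lambda^k e^\top$, and therefore $e^\top A^k B=\lambda^k e^\top B=0$ for every $k\ge 0$. In particular $e^\top Q(A,B)=0$, so the nontrivial vector $e$ lies in the left null space of $Q(A,B)$; hence $Q(A,B)$ cannot have full row rank and $(A,B)$ is uncontrollable.

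For the necessity direction ($\Rightarrow$), I would start from the assumption that $Q(A,B)$ does not have full row rank, so its left null space $\mathcal{N}=\{v\in\mathbb{C}^n\colon v^\top A^k B=0,\ k=0,\dots,n-1\}$ is nontrivial. By the Cayley--Hamilton theorem, every power $A^k$ with $k\ge n$ is a polynomial in $I,A,\dots,A^{n-1}$, so the defining condition extends to all $k\ge 0$; that is, $v\in\mathcal{N}$ implies $v^\top A^k B=0$ for every $k\ge 0$. The key observation is that $\mathcal{N}$ is invariant under $A^\top$: if $v\in\mathcal{N}$, then $(A^\top v)^\top A^k B=v^\top A^{k+1}B=0$ for all $k\ge 0$, so $A^\top v\in\mathcal{N}$.

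Finally, I would invoke the fact that every nontrivial subspace of $\mathbb{C}^n$ invariant under a linear map contains an eigenvector of that map, since the characteristic polynomial of the restriction of $A^\top$ to $\mathcal{N}$ splits over $\mathbb{C}$ and thus has a root $\lambda$. This produces a nonzero $e\in\mathcal{N}$ with $A^\top e=\lambda e$, equivalently $e^\top A=\lambda e^\top$, while membership $e\in\mathcal{N}$ gives $e^\top B=0$ from the $k=0$ term. The main subtlety lies precisely in this last step: passing from the mere existence of a nontrivial invariant left null space to an actual eigenvector requires working over $\mathbb{C}$, so that the characteristic polynomial is guaranteed a root, which is why the statement permits complex $\lambda$ and $e$.
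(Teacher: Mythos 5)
Your proof is correct, but note that the paper never proves this proposition: it is quoted as a known result directly from \cite{kailath1980linear}, so there is no in-paper argument to compare against. What you give is the standard self-contained proof of the eigenvector form of the PBH test, and it is complete: sufficiency by propagating $e^\top A^k B=\lambda^k e^\top B=0$ across the blocks of $Q(A,B)$, and necessity by observing that the left null space $\mathcal{N}$ of $Q(A,B)$ is $A^\top$-invariant (with Cayley--Hamilton extending the annihilation to all powers $k\ge 0$) and that a nontrivial invariant subspace of $\mathbb{C}^n$ contains an eigenvector of the restricted map. You correctly identify the only genuine subtlety, namely that one must work over $\mathbb{C}$ so the characteristic polynomial of the restriction splits. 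One minor point you pass over silently: uncontrollability of the real pair $(A,B)$ is rank deficiency of $Q(A,B)$ over $\mathbb{R}$, while your $\mathcal{N}$ lives in $\mathbb{C}^n$; this is harmless because the rank of a real matrix is the same over $\mathbb{R}$ and $\mathbb{C}$, but it merits a sentence. You are also right that the symmetry of $A$ plays no role here---the paper states the proposition for general pairs and only exploits symmetry later, e.g.\ when generalizing the test to target controllability in the proof of Theorem~\ref{theo4}, where diagonalizability of $\tilde{A}$ becomes essential.
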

Given a pair $(A,B)$, where $A\in\mathbb{R}^{n\times n}$ and $B\in\mathbb{R}^{n\times m}$, we say that the mode $(\lambda,e)$ of $A$, where $\lambda\in\mathbb{C}$ and $e\in\mathbb{C}^n$, is an \emph{uncontrollable mode} if $e^\top A=\lambda e^\top$ and $e^\top B=0$.



\subsection{Generic Properties}\vspace{-0.1cm}
If a symmetrically structured matrix is generically full rank, then any numerical realization has almost surely no zero eigenvalue. 
In this subsection, we characterize the generic rank of a symmetrically structured matrix in Lemma~\ref{symmetric}, which lays the foundation for a further characterization of spectral properties of numerical realizations.
	
\begin{lem}\label{symmetric}
{Consider an $n\times n$ symmetrically structured matrix $\bar{A}$, and a set $\mathcal{T}=\{{i_1},\dots,{i_k}\}\subseteq [n]$. Let $\mathcal{D}(\bar{A})=(\mathcal{X},\mathcal{E_{X,X}})$ be the digraph representation of $\bar{A}$, $\mathcal{X_T}\subseteq \mathcal{X}$ be the set of vertices indexed by $\mathcal{T}$, and $C_{\mathcal{T}}$ be defined as in~\eqref{CT}. The generic-rank of $C_{\mathcal{T}}\bar{A}$ equals to $k$ if and only if $\mathcal{\left|N(S)\right|\ge \left|S\right|}$, $\forall \mathcal{S}\subseteq\mathcal{X_T}$.}
\end{lem}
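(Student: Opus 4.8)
The statement is a Hall-type condition for the generic rank of the matrix $C_{\mathcal{T}}\bar A$, so the plan is to translate the rank condition into a combinatorial matching problem on a bipartite graph and then invoke Hall's theorem.

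First I would observe that $C_{\mathcal{T}}\bar A$ is the $k\times n$ submatrix of $\bar A$ obtained by selecting the rows indexed by $\mathcal{T}$. Its $\ell$-th row has a $\star$-entry in column $j$ precisely when $(x_j,x_{i_\ell})\in\mathcal{E}_{\mathcal{X,X}}$, i.e.\ when $x_j\in\mathcal N(\{x_{i_\ell}\})$. The generic rank of a structured matrix equals its term-rank (this is the standard fact, and for our row-selected submatrix the symmetry of $\bar A$ does not obstruct it because the $k$ selected rows are distinct and a term-rank witness uses distinct columns, so the diagonal-symmetry constraints can be satisfied by a generic assignment; I would make this reduction explicit). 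Thus $\textrm{g--rank}(C_{\mathcal{T}}\bar A)=k$ if and only if $\textrm{t--rank}(C_{\mathcal{T}}\bar A)=k$, namely there exist distinct columns $j_1,\dots,j_k$ with $[\bar A]_{i_\ell j_\ell}=\star$ for all $\ell$.

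Next I would encode this term-rank-$k$ condition as a perfect matching saturating $\mathcal{X_T}$ in the bipartite graph $\mathcal{B}(\mathcal{X},\mathcal{X_T},\cdot)$ whose edges join $x_j$ to $x_{i_\ell}$ exactly when $[\bar A]_{i_\ell j}=\star$. A set of distinct witness columns $j_1,\dots,j_k$ for the $k$ target rows is exactly a matching of size $k$, i.e.\ one that matches every vertex of $\mathcal{X_T}$. By Hall's marriage theorem, such a matching exists if and only if for every $\mathcal S\subseteq\mathcal{X_T}$ the neighborhood of $\mathcal S$ in this bipartite graph has size at least $|\mathcal S|$. Finally I would check that the bipartite neighborhood of $\mathcal S$ coincides with the in-neighbour set $\mathcal N(\mathcal S)$ used in the statement: $x_j$ is a bipartite neighbor of some $x_{i_\ell}\in\mathcal S$ iff $[\bar A]_{i_\ell j}=\star$ iff $(x_j,x_{i_\ell})\in\mathcal E_{\mathcal X,\mathcal X}$ iff $x_j\in\mathcal N(\mathcal S)$, which gives the claimed condition $|\mathcal N(\mathcal S)|\ge|\mathcal S|$ for all $\mathcal S\subseteq\mathcal{X_T}$.

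The one genuinely delicate step is the first reduction, equating generic rank with term-rank in the presence of the symmetry constraint on $\bar A$. For an unconstrained structured matrix this is classical, but here the free parameters above and below the diagonal are tied together, so I must argue that a term-rank witness still yields a numerical realization of rank $k$ on a non-empty (indeed dense) subset of the symmetric parameter space. I expect this to follow because the $k\times k$ minor indexed by the witness rows $\mathcal T$ and columns $j_1,\dots,j_k$ is a nonzero polynomial in the independent symmetric parameters — its term-rank being $k$ guarantees at least one nonvanishing term in the determinant expansion \eqref{eq:detexpand}, and the symmetry identifications cannot force total cancellation of that term because it involves distinct rows and columns — so the minor is generically nonzero, establishing $\textrm{g--rank}(C_{\mathcal T}\bar A)=k$. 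Conversely, the term-rank upper-bounds the rank of every realization, giving the necessity direction directly.
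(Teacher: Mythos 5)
Your overall architecture is the same as the paper's: translate $\textrm{g--rank}(C_{\mathcal{T}}\bar A)=k$ into a term-rank/matching condition (the paper does this via Proposition~\ref{termrank}, you via Hall's theorem directly, which is equivalent), check that the bipartite neighborhood coincides with $\mathcal{N}(\mathcal{S})$, and prove necessity from the fact that the rank of every realization is bounded by the term rank --- all of that is fine. The gap is in the one step you yourself flag as delicate, and the justification you offer there does not hold up. You write that ``the symmetry identifications cannot force total cancellation of that term because it involves distinct rows and columns,'' but \emph{every} term of the expansion \eqref{eq:detexpand} involves distinct rows and columns; the danger is not that the witness term degenerates by itself, but that after identifying $[\bar A]_{ij}$ with $[\bar A]_{ji}$, two \emph{different} permutation terms collapse to the same monomial and their signed contributions cancel. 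That such identifications can genuinely annihilate all minors of witness size is shown by the skew-symmetric analogue: for the $3\times 3$ zero-diagonal pattern with the constraint $[\tilde M]_{ji}=-[\tilde M]_{ij}$, the term rank is $3$ yet the determinant vanishes identically, each cycle term cancelling against its reversal. So an argument specific to the \emph{symmetric} identification is required, and your proposal never supplies one.

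The missing idea --- which is precisely the content of the paper's proof --- is that monomial collisions occur only between a bijection and those obtained from it by reversing cycles, and that cycle reversal preserves the signature: in the paper's notation, the principal-like minor evaluates to $\textrm{sgn}(\sigma_1)\prod_{\ell}[\tilde A]_{i_\ell j_\ell}+\textrm{sgn}(\sigma_2)\prod_{\ell}[\tilde A]_{j_\ell i_\ell}$ with $\sigma_2=\sigma_1^{-1}$, so $\textrm{sgn}(\sigma_1)=\textrm{sgn}(\sigma_2)$ and, by symmetry of the entries, the two identified terms \emph{add} rather than cancel. Witness entries whose mirror lies outside the selected minor cause no collisions at all; the paper isolates them and factors them out of the determinant as in \eqref{product}, reducing to the principal-like case. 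Concretely, the paper constructs a realization supported only on the witness entries and their mirrors and splits into the cases $\{j_1,\dots,j_k\}=\{i_1,\dots,i_k\}$ and $\{j_1,\dots,j_k\}\ne\{i_1,\dots,i_k\}$. To repair your proof you must add this signature/cycle-reversal argument (or an explicit construction of the same kind); with it in place, the rest of your Hall's-theorem route and your necessity direction are correct and essentially coincide with the paper's.
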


Lemma~\ref{symmetric} establishes a relationship between the generic rank of a submatrix of a symmetrically structured matrix and the topology of its corresponding digraph. 
Subsequently, Corollary~\ref{coro} follows, which 
characterizes the generic rank of the concatenation of a symmetrically structured matrix $\bar{A}\in\{0,\star\}^{n\times n}$ and a structured matrix $\bar{B}\in\{0,\star\}^{n\times m}$.

\begin{coro}\label{coro}
Consider a structural pair $(\bar{A},\bar{B})$, where $\bar{A}$ is symmetrically structured, and a set $\mathcal{T}=\{{i_1},\dots,{i_k}\}\subseteq [n]$. Let $\mathcal{D}(\bar{A},\bar{B})=(\mathcal{X}\cup\mathcal{U}, \mathcal{E}_{\mathcal{X},\mathcal{X}}\cup \mathcal{E}_{\mathcal{U},\mathcal{X}}) $ be the digraph representation of $(\bar{A},\bar{B})$, and $\mathcal{X_T}\subseteq \mathcal{X}$ be the set of vertices indexed by $\mathcal{T}.$ If $\left|\mathcal{N(S)}\right|\ge \left|\mathcal{S}\right|$, $\forall\mathcal{S}\subseteq\mathcal{X_T}$, then $\emph{g--rank} (C_{\mathcal{T}}[\bar{A},\bar{B}]) = k.$
\end{coro}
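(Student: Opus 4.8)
The plan is to deduce this corollary directly from Lemma~\ref{symmetric} by embedding the (generally non-symmetric) input pattern $\bar{B}$ into an enlarged \emph{symmetrically} structured matrix, chosen so that the Hall-type neighbourhood condition phrased over $\mathcal{D}(\bar{A},\bar{B})$ becomes verbatim the condition of Lemma~\ref{symmetric} for the enlarged matrix. This sidesteps the difficulty that $C_{\mathcal{T}}[\bar{A},\bar{B}]$ is neither square nor symmetric, so that Lemma~\ref{symmetric} cannot be applied to it as stated.

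First I would form the $(n+m)\times(n+m)$ symmetrically structured matrix
\begin{equation*}
\bar{A}' = \begin{bmatrix} \bar{A} & \bar{B} \\ \bar{B}^\top & \mathbf{0}_{m\times m} \end{bmatrix},
\end{equation*}
where the free parameter in position $(i,\,n+j)$, originating from $[\bar{B}]_{ij}$, is declared to share its value with the parameter in position $(n+j,\,i)$, originating from $[\bar{B}^\top]_{ji}$. Since $\bar{A}$ is already symmetrically structured and the two off-diagonal blocks are mutual transposes with shared parameters, $\bar{A}'$ is symmetrically structured, with parameter set $\mathbf{p}_{\bar{A}}\cup\mathbf{p}_{\bar{B}}$. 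Identifying the extra vertices $x_{n+1},\dots,x_{n+m}$ of $\mathcal{D}(\bar{A}')$ with the input vertices $u_1,\dots,u_m$ of $\mathcal{D}(\bar{A},\bar{B})$, I would verify that for every $\mathcal{S}\subseteq\mathcal{X_T}$ the in-neighbour set of $\mathcal{S}$ computed in $\mathcal{D}(\bar{A}')$ coincides with the one computed in $\mathcal{D}(\bar{A},\bar{B})$: the edges entering a state vertex $x_i$ with $i\in\mathcal{T}$ are exactly the edges $(x_j,x_i)$ with $[\bar{A}]_{ij}=\star$ together with the edges $(u_j,x_i)$ with $[\bar{B}]_{ij}=\star$, while the symmetric block $\bar{B}^\top$ contributes only \emph{out}-edges from $\mathcal{X}$, which are irrelevant to in-neighbours of $\mathcal{S}$. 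Hence $|\mathcal{N(S)}|\ge|\mathcal{S}|$ for all $\mathcal{S}\subseteq\mathcal{X_T}$ holds in $\mathcal{D}(\bar{A}')$ precisely under the hypothesis of the corollary.

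With this translation in hand, applying the sufficiency direction of Lemma~\ref{symmetric} to $\bar{A}'$ with the target set $\mathcal{T}\subseteq[n]\subseteq[n+m]$ yields $\textrm{g--rank}(C_{\mathcal{T}}\bar{A}')=k$. It then remains to identify $C_{\mathcal{T}}\bar{A}'$ with $C_{\mathcal{T}}[\bar{A},\bar{B}]$: each row of $\bar{A}'$ indexed by $i\in\mathcal{T}$ (so $i\le n$) is exactly the $i$-th row of $\bar{A}$ concatenated with the $i$-th row of $\bar{B}$, and the only parameter-sharing constraints surviving in these selected rows are those internal to $\bar{A}$, which are identical to the constraints carried by $C_{\mathcal{T}}[\bar{A},\bar{B}]$ itself; the shared copies residing in $\bar{B}^\top$ never appear among the selected rows. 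Therefore the families of numerical realizations of $C_{\mathcal{T}}\bar{A}'$ and of $C_{\mathcal{T}}[\bar{A},\bar{B}]$ coincide, their generic ranks agree, and $\textrm{g--rank}(C_{\mathcal{T}}[\bar{A},\bar{B}])=k$ follows.

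The step I expect to require the most care is the bookkeeping in the last paragraph: confirming that the symmetric completion does not secretly change the generic rank of the selected rows, i.e., that the sole effect of introducing $\bar{B}^\top$ is to render the neighbourhood condition of Lemma~\ref{symmetric} legible on $\mathcal{D}(\bar{A},\bar{B})$, while leaving the parameters appearing in rows $\mathcal{T}$ and their sharing pattern untouched. Everything else is a direct transcription of Lemma~\ref{symmetric}.
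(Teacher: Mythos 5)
Your proposal is correct, and it takes a genuinely different route from the paper. The paper proves the corollary directly: it invokes Proposition~\ref{termrank} to extract from the hypothesis $k$ $\star$-entries of $[\bar{A},\bar{B}]$ lying on distinct rows and columns, splits them into the $q$ entries in columns of $\bar{A}$ and the $k-q$ entries in columns of $\bar{B}$, and exhibits a numerical realization in which the $k\times k$ minor factors as $\det([\tilde{A},\tilde{B}]_{i_1,\dots,i_q}^{j_1,\dots,j_q})\det([\tilde{A},\tilde{B}]_{i_{q+1},\dots,i_k}^{j_{q+1},\dots,j_k})\neq 0$, the first factor being nonzero by the construction inside the proof of Lemma~\ref{symmetric} and the second because the entries of $\bar{B}$ are unconstrained. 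Your bordering trick --- embedding $(\bar{A},\bar{B})$ into the $(n+m)\times(n+m)$ symmetrically structured matrix $\bar{A}'$ with parameters shared across $\bar{B}$ and $\bar{B}^\top$ --- instead reduces the corollary to a literal instance of the sufficiency direction of Lemma~\ref{symmetric}, used as a black box. Both of your supporting observations check out: the block $\bar{B}^\top$ contributes only edges \emph{into} the input-like vertices, so in-neighbourhoods of subsets of $\mathcal{X_T}$ agree in $\mathcal{D}(\bar{A}')$ and $\mathcal{D}(\bar{A},\bar{B})$; and since $C_{\mathcal{T}}\bar{A}'$ and $C_{\mathcal{T}}[\bar{A},\bar{B}]$ are images of the same parameter vector $[\mathbf{p}_{\bar{A}},\mathbf{p}_{\bar{B}}]$ under the same row selection (the shared copies in $\bar{B}^\top$ never enter rows indexed by $\mathcal{T}\subseteq[n]$), their realization families, hence generic ranks, coincide. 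What your route buys is modularity and cleanliness: no new determinant computation, no case analysis on where the symmetric partners land, and the argument extends verbatim to any structured $\bar{B}$. What the paper's route buys is an explicit witness realization with a factored nonzero minor, which is the concrete object reused downstream (the nonzero minor of $C_{\mathcal{T}}[\tilde{A},\tilde{B}]$ underlies the proper variety $W$ in the proofs of Theorems~\ref{theo3} and~\ref{theo4}). One cosmetic point: in your application of Lemma~\ref{symmetric} to $\bar{A}'$, the selection matrix $C_{\mathcal{T}}$ must be read as $k\times(n+m)$; this is implicit in your remark that $\mathcal{T}\subseteq[n]\subseteq[n+m]$, but it is worth stating explicitly.
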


In the remaining subsections, we aim to provide necessary and sufficient conditions for structural controllability. To achieve this goal, we notice that the eigenvalues of the state matrix are closely related to controllability, as indicated by Proposition~\ref{pbh}. 
{Besides, the approach in \cite{hosoe1979irreducibility} shows that for an irreducible structural pair with no symmetric parameter dependencies, all the nonzero modes of its numerical realization are almost surely simple and controllable. Similarly, to characterize structural controllability of undirected networks, we will provide characterizations of the modes in the numerical realization of a structural pair involving symmetrically structured matrix. Instead of using the maximum order of principle minor as in \cite{hosoe1979irreducibility}, 
we derive below a condition based on the term rank to ensure that generically the numerical realization of a symmetrically structured matrix has $k$ nonzero simple eigenvalues.}
\begin{lem}\label{lem4}
	Given an $n\times n$ symmetrically structured matrix $\bar{A}$, if $\emph{t--rank}(\bar{A})=k$, then there exists a proper variety $V_1\subset \mathbb{R}^{n_{\bar{A}}}$, such that for any numerical realization $\tilde{A}$, where the numerical values assigned to free parameters of $\bar{A}$ are encoded in the vector $\mathbf{p}_{\tilde{A}}\in \mathbb{R}^{n_{\bar{A}}}\setminus V_1$, $\tilde{A}$ has $k$ nonzero simple eigenvalues.
\end{lem}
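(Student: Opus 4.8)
The plan is to exploit the symmetry of every realization throughout. Since $\bar A$ is symmetrically structured, every numerical realization $\tilde A$ is a real symmetric matrix, hence it is diagonalizable with real eigenvalues, and its rank equals the number of its nonzero eigenvalues counted with multiplicity. Thus ``$\tilde A$ has $k$ nonzero simple eigenvalues'' will follow from two generic claims: (i) $\textrm{rank}(\tilde A)=k$, so that there are exactly $k$ nonzero eigenvalues and $0$ has multiplicity $n-k$; and (ii) those $k$ nonzero eigenvalues are pairwise distinct. First I would dispose of the easy bound: since $\textrm{t--rank}(\bar A)=k$, every $(k{+}1)\times(k{+}1)$ submatrix of $\bar A$ has no transversal, so each term of its determinant expansion~\eqref{eq:detexpand} contains a zero factor and the minor vanishes identically; hence $\textrm{rank}(\tilde A)\le k$ for \emph{every} realization.

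For the matching lower bound on the rank, I would use Lemma~\ref{symmetric}. Because $\textrm{t--rank}(\bar A)=k$ equals the size of a maximum matching in the bipartite graph of $\bar A$, I pick such a matching and let $\mathcal T$ be the set of $k$ rows it saturates. By Hall's theorem the saturable set $\mathcal T$ satisfies the neighborhood condition $|\mathcal{N}(\mathcal S)|\ge|\mathcal S|$ for all $\mathcal S\subseteq\mathcal X_{\mathcal T}$ (one checks that the in-neighbor set $\mathcal{N}(\mathcal S)$ in $\mathcal D(\bar A)$ is exactly the set of columns incident to the rows of $\mathcal S$). Lemma~\ref{symmetric} then gives $\textrm{g--rank}(C_{\mathcal T}\bar A)=k$, so outside a proper variety the $k$ rows indexed by $\mathcal T$ are linearly independent and $\textrm{rank}(\tilde A)=k$. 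Combining with $\textrm{rank}(\tilde A)\le k$, the rank is generically exactly $k$, and $\det(\lambda I-\tilde A)=\lambda^{n-k}\,q_{\tilde A}(\lambda)$, where $q_{\tilde A}(\lambda)=\lambda^{k}+b_{k-1}(\mathbf{p}_{\tilde A})\lambda^{k-1}+\dots+b_0(\mathbf{p}_{\tilde A})$ collects the nonzero modes. The factorization is a polynomial identity in $\mathbf{p}_{\tilde A}$ precisely because $\textrm{rank}(\tilde A)\le k$ forces the lower characteristic coefficients to vanish identically, and $b_0\not\equiv 0$ since at a rank-$k$ realization $b_0$ equals (up to sign) the product of the nonzero eigenvalues, which is nonzero.

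It then remains to prove the simplicity claim~(ii). The $k$ nonzero eigenvalues are the roots of $q_{\tilde A}$, so they are pairwise distinct whenever the discriminant $\mathrm{disc}_\lambda\, q_{\tilde A}(\mathbf{p}_{\tilde A})$, a polynomial in the parameters, is nonzero; together with $b_0\ne 0$ this also guarantees none of these roots is $0$. I would therefore set $V_1$ to be the union of the rank-deficiency locus, $\{b_0=0\}$, and $\{\mathrm{disc}_\lambda\, q=0\}$. Each is a proper variety \emph{provided the corresponding polynomial is not identically zero}; the first two were handled above, so the entire lemma reduces to showing that $\mathrm{disc}_\lambda\, q\not\equiv 0$, i.e. to exhibiting a single symmetric realization with $k$ distinct nonzero eigenvalues.

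This last step is the main obstacle, and I would attack it by reduction and construction. Using the classical fact that a real symmetric matrix has rank equal to the largest order of a nonvanishing \emph{principal} minor, a generic rank-$k$ realization has a nonsingular $k\times k$ principal submatrix; this identifies a principal block $\bar A_{\mathcal S\mathcal S}$ that is itself symmetrically structured and generically nonsingular. Setting all free parameters outside $\mathcal S\times\mathcal S$ to zero yields a realization whose nonzero spectrum is exactly that of $\tilde A_{\mathcal S\mathcal S}$, so the task becomes: a generically nonsingular $k\times k$ symmetrically structured matrix admits a realization with $k$ distinct eigenvalues. Splitting $\bar A_{\mathcal S\mathcal S}$ into connected components (each generically nonsingular) and rescaling the components so their spectra lie in disjoint intervals, it suffices to realize distinct eigenvalues within a single connected component. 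The genuine difficulty here is that symmetry together with the fixed zero pattern could \emph{a priori} force a repeated nonzero eigenvalue in every realization; indeed a naive realization can fail (e.g. the all-ones realization of a triangle pattern has spectrum $\{2,-1,-1\}$), even though generic weights give distinct eigenvalues. I expect to resolve this by a codimension argument: the repeated-eigenvalue locus in the space of symmetric matrices has codimension two, so it suffices to produce, within the support subspace, one perturbation direction splitting any putative multiplicity of the block, which I would exhibit explicitly from the connectivity of the component. Establishing this non-degeneracy in full generality is the crux of the argument.
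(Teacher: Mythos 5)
Your framework is sound and in fact matches the paper's skeleton: the coefficients $a_0,\dots,a_{n-k-1}$ of the characteristic polynomial vanish identically because the term rank bounds the order of any nonvanishing minor, the variety $V_1$ is exactly the union of $\{a_{n-k}=0\}$ with a repeated-root locus (the paper uses the resultant $R(\varphi_{\tilde{A}},\varphi'_{\tilde{A}})$, which is your discriminant up to a nonzero factor), and properness of $V_1$ reduces to exhibiting a single symmetric realization with $k$ distinct nonzero eigenvalues. Your rank bookkeeping is also correct: the upper bound $\textrm{rank}(\tilde{A})\le k$ from the term rank, the lower bound via Hall's condition on the matched rows together with Lemma~\ref{symmetric}, and $b_0\not\equiv 0$ at a rank-$k$ symmetric realization. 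But the proof is not complete: the one step you defer --- producing a realization of the given pattern with $k$ simple nonzero eigenvalues --- is precisely the content of the lemma, and your proposed ``codimension two'' argument does not close it. The repeated-eigenvalue locus has codimension two in the full space of symmetric matrices, but that says nothing about its intersection with the coordinate subspace cut out by the zero pattern: a proper linear subspace can in principle lie entirely inside a codimension-two variety, so without a pattern-specific construction you cannot conclude $\mathrm{disc}_\lambda\,q\not\equiv 0$. You flag this yourself as ``the crux,'' so as written the proposal reduces the lemma to an unproven claim rather than proving it.

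The paper closes exactly this gap constructively. Its Lemma~\ref{lem3} shows that the rows of a maximum transversal induce a subgraph of $\mathcal{D}(\bar{A})$ that can be covered by vertex-disjoint cycles; symmetry then ensures every even cycle of length $2q$ is itself covered by $q$ disjoint $2$-cycles, and assigning distinct nonzero weights $a_{i_\ell j_\ell}$ to these produces $2\times 2$ blocks with simple eigenvalue pairs $\pm a_{i_\ell j_\ell}$, all nonzero and pairwise distinct. An odd cycle of length $2q+1$ yields $2q$ simple nonzero eigenvalues plus one zero eigenvalue, which the paper removes by a small perturbation supported on the cycle's edges: $\textrm{t--rank}(\bar{A}_i)=2q+1$ gives $\textrm{g--rank}(\bar{A}_i)=2q+1$ via Lemma~\ref{symmetric}, so the perturbed block can be made nonsingular, while Proposition~\ref{hoff} (Hoffman--Wielandt) keeps all eigenvalue gaps open for a sufficiently small perturbation. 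Note that your block reduction (zeroing all parameters outside a generically nonsingular principal submatrix) is essentially the paper's first move; what your proposal is missing is precisely this even/odd cycle decomposition and the perturbation argument, which is where the symmetry hypothesis does its real work --- and indeed, as the paper's second remark shows with $\bar{M}=\left[\begin{smallmatrix}0&\star\\0&0\end{smallmatrix}\right]$, the statement is false without it.
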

\begin{remark}
		The challenge in the proof of Lemma~\ref{lem4} is to construct a finite number of nonzero polynomials, i.e., the polynomials of where not every coefficient is zero, such that the numerical values assigned to free parameters of $\bar{A}$ in a numerical realization $\tilde{A}$, where $\tilde{A}$ does not have $k$ nonzero simple eigenvalues, are the zeros of those polynomials. Since the set of zeros of a nonzero polynomial has Lebesgue measure zero \cite{federer2014geometric}, it follows that for any numerical realization $\tilde{A}$, $\tilde{A}$ has almost surely $k$ nonzero simple eigenvalues.
\end{remark}
\begin{remark}
	{Lemma~\ref{lem4} generally is not true for a structured matrix. For example, consider $\bar{M}=\left[\begin{smallmatrix}
	0,\star\\
	0,0
	\end{smallmatrix}\right]$, $\emph{t--rank}(\bar{M})=1$, but for any numerical realization, 
	$\tilde{M}$ has no nonzero mode.}
\end{remark}


{As shown in \cite{lin1974structural,hosoe1979irreducibility}, irreducibility is a necessary condition for structural controllability. 
We can expect that irreducibility also plays a similar role in symmetrically structured systems. Moreover, we show below that irreducibility ensures that all nonzero simple modes of $\tilde{A}$ are controllable, generically.}



\begin{lem}\label{theo2}
Given a structural pair $(\bar{A},\bar{B})$, where $\bar{A}$ is symmetrically structured and $\textrm{t--rank}(\bar{A})=k$, if $(\bar{A},\bar{B})$ is irreducible, then there exists a proper variety $V\subset \mathbb{R}^{n_{\bar{A}}+n_{\bar{B}}}$, such that for any numerical realization $(\tilde{A},\tilde{B})$ with $[\mathbf{p}_{\tilde{A}},\mathbf{p}_{\tilde{B}}]\in \mathbb{R}^{n_{\bar{A}}+n_{\bar{B}}}\setminus V$, $\tilde{A}$ has $k$ nonzero, simple and controllable modes.
\end{lem}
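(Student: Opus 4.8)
The plan is to build a proper variety $V$ by combining the exceptional set $V_1$ from Lemma~\ref{lem4} with a further exceptional set that excludes realizations having an uncontrollable nonzero simple mode. By Lemma~\ref{lem4}, for $\mathbf{p}_{\tilde{A}}\in\mathbb{R}^{n_{\bar{A}}}\setminus V_1$ the realization $\tilde{A}$ has exactly $k$ nonzero simple eigenvalues $\lambda_1,\dots,\lambda_k$, each with a one-dimensional (left) eigenspace spanned by some $e_i\in\mathbb{C}^n$. Since $\tilde{A}$ is symmetric and the $\lambda_i$ are distinct, these eigenvectors are the clean objects to work with, and the real work is showing that generically $e_i^\top \tilde{B}\ne 0$ for every $i$, which by the PBH test in Proposition~\ref{pbh} is exactly controllability of the mode $(\lambda_i,e_i)$.

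First I would fix the state parameters $\mathbf{p}_{\tilde{A}}$ outside $V_1$ and analyze each nonzero simple mode. The key obstruction is that $(\lambda_i,e_i)$ is uncontrollable iff $e_i^\top\tilde{B}=0$, i.e.\ iff the row space of $\tilde{B}$ is orthogonal to $e_i$. The natural strategy is to show this is a non-generic event in the $\mathbf{p}_{\tilde{B}}$ parameters. For a fixed admissible $\tilde{A}$, the condition $e_i^\top\tilde{B}=0$ is a system of polynomial (indeed linear) equations in the free entries of $\bar{B}$; the entire set of bad $\tilde{B}$ lies in a variety. The crux is to prove this variety is \emph{proper}, i.e.\ there is at least one numerical $\tilde{B}$ (compatible with the structural pattern $\bar{B}$) making every mode controllable. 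This is precisely where irreducibility of $(\bar{A},\bar{B})$ must enter: irreducibility guarantees, via the standard structural argument (as in \cite{lin1974structural,hosoe1979irreducibility}), that no eigenvector $e_i$ can have its nonzero-support confined to the fixed-zero rows of $\bar{B}$; equivalently, for each $i$ there is some free entry of $\bar{B}$ whose coefficient in $e_i^\top\tilde{B}$ is nonzero, so the bad set for mode $i$ is a proper subvariety of the $\mathbf{p}_{\tilde{B}}$-space.

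The main obstacle, and the step I would treat most carefully, is handling the interaction between the two parameter blocks. The eigenvectors $e_i$ depend (algebraically, but not polynomially) on $\mathbf{p}_{\tilde{A}}$, so the coefficients of the linear-in-$\tilde{B}$ conditions are themselves algebraic functions of $\mathbf{p}_{\tilde{A}}$. The clean way to resolve this is to work on the joint space $\mathbb{R}^{n_{\bar{A}}+n_{\bar{B}}}$ and show that the set of $(\mathbf{p}_{\tilde{A}},\mathbf{p}_{\tilde{B}})$ for which some nonzero simple mode is uncontrollable is contained in a proper variety. One rigorous route is to use the resultant/characteristic-polynomial machinery: uncontrollability of a mode is captured by the vanishing of the controllability-matrix minors restricted to the nonzero spectrum, and after clearing denominators these become genuine polynomials in $(\mathbf{p}_{\tilde{A}},\mathbf{p}_{\tilde{B}})$. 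It then suffices to exhibit a \emph{single} joint realization, outside $V_1$ in the $\tilde{A}$-block and with the $\tilde{B}$ chosen as above, for which all $k$ nonzero modes are simultaneously controllable; this witnesses that the bad polynomial is not identically zero, so its zero set is a proper variety.

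Finally I would set $V$ to be the union of $V_1$ (lifted to the joint space) and the proper variety just constructed. A finite union of proper varieties is a proper variety, so $V\subsetneq\mathbb{R}^{n_{\bar{A}}+n_{\bar{B}}}$, and for every $[\mathbf{p}_{\tilde{A}},\mathbf{p}_{\tilde{B}}]\notin V$ the realization $\tilde{A}$ has $k$ nonzero simple eigenvalues (from $V_1$) all of which are controllable (from the second variety), giving $k$ nonzero, simple, controllable modes as claimed. The part I expect to require the most care is the existence of the single good witness realization: one must ensure that the same $\tilde{B}$ avoids the bad set for all $k$ modes at once and that the chosen $\tilde{A}$ genuinely realizes the $\textrm{t--rank}$ bound, so that irreducibility can be invoked uniformly across the spectrum.
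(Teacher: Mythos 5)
Your proposal takes essentially the same route as the paper's proof: lift $V_1$ from Lemma~\ref{lem4} to the joint parameter space $\mathbb{R}^{n_{\bar{A}}+n_{\bar{B}}}$, capture uncontrollability of a nonzero simple mode by a single polynomial condition obtained through resultants, let $V_2$ be its zero set, invoke irreducibility to show $V_2$ is proper, and set $V=V_1\cup V_2$ --- the paper (Lemma~\ref{lem5}) concretizes your ``clearing denominators'' step by noting that for a simple eigenvalue the left eigenvector equals, up to scale, any nonzero row of $\mathrm{adj}(\lambda I-\tilde{A})$, so uncontrollability becomes the vanishing at $\lambda$ of the genuinely polynomial function $\psi_{\tilde{A},\tilde{B}}(s)=\mathrm{tr}\bigl([\mathrm{adj}(sI-\tilde{A})\tilde{B}][\mathrm{adj}(sI-\tilde{A})\tilde{B}]^\top\bigr)$, and $V_2=\{[\mathbf{p}_{\tilde{A}},\mathbf{p}_{\tilde{B}}]\colon R(\varphi_{\tilde{A}},\psi_{\tilde{A},\tilde{B}})=0\}$ via Proposition~\ref{prop}. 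One caution: your interim claim that irreducibility guarantees, for each \emph{fixed} admissible $\tilde{A}$, that no nonzero simple eigenvector vanishes on all actuated rows is false as stated (an unlucky symmetric realization of an irreducible pattern can have exactly such an eigenvector, which is why the argument must run jointly in $(\mathbf{p}_{\tilde{A}},\mathbf{p}_{\tilde{B}})$), and the paper certifies that the resultant polynomial is not identically zero by contradiction, adapting Hosoe's Theorem~2, rather than by exhibiting your explicit witness realization --- but since you ultimately route the argument through the joint space, this does not break your plan.
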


\subsection{Structural Controllability}
We have shown that irreducibility guarantees that generically all non-zero simple modes of $(\tilde{A},\tilde{B})$ are controllable. 
In this subsection, Theorem~\ref{theo3} proposes conditions guaranteeing that generically both the nonzero and zero modes of $(\tilde{A},\tilde{B})$ are controllable, therefore establishes a graph-theoretic necessary and sufficient condition for structural controllability in symmetrically structured system.


\begin{thm}\label{theo3}
Let $(\bar{A},\bar{B})$ be a structural pair, with $\bar{A}$ being a symmetrically structured matrix, and let $\mathcal{X}$ be the set of state vertices in $\mathcal{D}(\bar{A},\bar{B})$. The structural pair $(\bar{A}, \bar{B})$ is structurally controllable, if and only if, the following conditions hold simultaneously in $\mathcal{D}(\bar{A},\bar{B}):$
\begin{enumerate}
\item all the state vertices are input-reachable;
\item $\left|\mathcal{N(S)}\right|\ge \left|\mathcal{S}\right|$, $\forall \mathcal{S\subseteq X}.$
\end{enumerate}
\end{thm}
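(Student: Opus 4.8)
The plan is to reduce structural controllability to the Popov--Belevitch--Hautus test (Proposition~\ref{pbh}) and then treat the zero and nonzero modes of a numerical realization separately. Recall that $\tilde A$ is symmetric, hence diagonalizable with real eigenvalues, and that $(\tilde A,\tilde B)$ is controllable if and only if $[\tilde A-\lambda I,\tilde B]$ has full row rank for every $\lambda\in\mathbb{C}$; equivalently, no mode is uncontrollable. Taking $\lambda=0$ this reads $\textrm{rank}[\tilde A,\tilde B]=n$, while for $\lambda\neq 0$ it amounts to every nonzero mode being controllable. A preliminary observation I would record is that condition~1 is exactly irreducibility of $(\bar A,\bar B)$: if some state is not input-reachable, the set $\mathcal{S}_0\subsetneq\mathcal{X}$ of non-input-reachable states receives no edges from $\mathcal{X}\setminus\mathcal{S}_0$ nor from $\mathcal{U}$, which after relabeling produces the block structure in the definition of reducibility, and the converse is immediate.

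For necessity, suppose $(\bar A,\bar B)$ is structurally controllable. If condition~1 failed, the pair would be reducible, so every realization would contain an autonomous, input-free block of states and would be uncontrollable by Proposition~\ref{pbh}, contradicting structural controllability. If condition~2 failed, there would exist $\mathcal{S}\subseteq\mathcal{X}$ with $|\mathcal{N}(\mathcal{S})|<|\mathcal{S}|$; the rows of $[\tilde A,\tilde B]$ indexed by $\mathcal{S}$ can have nonzero entries only in the $|\mathcal{N}(\mathcal{S})|$ columns indexed by $\mathcal{N}(\mathcal{S})$, so those rows are linearly dependent for \emph{every} realization, forcing $\textrm{rank}[\tilde A,\tilde B]<n$ and hence an uncontrollable zero-mode by PBH, again a contradiction. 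Thus both conditions are necessary.

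For sufficiency, set $k:=\textrm{t--rank}(\bar A)$ and assume both conditions hold. Condition~1 gives irreducibility, so Lemma~\ref{theo2} supplies a proper variety $V$ off of which $\tilde A$ has $k$ nonzero, simple, controllable modes. Since $\textrm{rank}(\tilde A)\le\textrm{t--rank}(\bar A)=k$ always, whereas Lemma~\ref{lem4} yields $k$ distinct nonzero eigenvalues generically, the number of nonzero eigenvalues of $\tilde A$ equals $k$ off a proper variety; hence generically all nonzero modes are among the $k$ controllable ones. For the zero mode, condition~2 together with Corollary~\ref{coro} (applied with $\mathcal{T}=[n]$, so $C_{\mathcal{T}}=I$) gives $\textrm{g--rank}[\bar A,\bar B]=n$, so $\textrm{rank}[\tilde A,\tilde B]<n$ only on a proper variety $V_0$. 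Choosing the parameters of $(\tilde A,\tilde B)$ in the complement of the proper variety $V\cup V_0$, every nonzero mode is controllable and $[\tilde A,\tilde B]$ has full row rank, so by Proposition~\ref{pbh} the realization is controllable and $(\bar A,\bar B)$ is structurally controllable.

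The main obstacle I anticipate is the spectral bookkeeping in the sufficiency step: one must guarantee that ``$k$ controllable nonzero modes'' genuinely exhausts \emph{all} nonzero modes, which rests on the generic identity $\textrm{rank}(\tilde A)=\textrm{t--rank}(\bar A)$ obtained by combining Lemma~\ref{lem4} with the always-valid bound $\textrm{rank}\le\textrm{t--rank}$, and on patching the two independent generic guarantees (from Lemma~\ref{theo2} and Corollary~\ref{coro}) into a single realization via the fact that a finite union of proper varieties is proper, so its complement is nonempty. The remaining care is the explicit bidirectional identification of condition~1 with irreducibility for the symmetric pattern, which I would verify directly.
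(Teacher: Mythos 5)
Your proposal is correct, and its overall skeleton coincides with the paper's: sufficiency is argued exactly as in the paper, via condition~1 $\Rightarrow$ irreducibility $\Rightarrow$ Lemma~\ref{theo2} (all $k=\textrm{t--rank}(\bar A)$ nonzero modes simple and controllable off a proper variety $V$), condition~2 $\Rightarrow$ Corollary~\ref{coro} with $\mathcal{T}=[n]$ $\Rightarrow$ $\textrm{g--rank}([\bar A,\bar B])=n$ handles the zero mode off a second proper variety, and a realization is chosen in the complement of the (proper) union. The one genuine divergence is in the necessity of condition~2: the paper invokes Lemma~\ref{nec}, a Cayley--Hamilton argument bounding $\textrm{rank}(C_{\mathcal{T}}Q(\tilde A,\tilde B))\le|\mathcal{N}(\mathcal{X_T})|$, whereas you observe directly that the rows of $[\tilde A,\tilde B]$ indexed by $\mathcal{S}$ are supported only on the $|\mathcal{N}(\mathcal{S})|$ columns of $\mathcal{N}(\mathcal{S})$, so $\textrm{rank}[\tilde A,\tilde B]<n$ for every realization and $\lambda=0$ is an uncontrollable mode by Proposition~\ref{pbh}. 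Your version is more elementary and entirely valid; what the paper's Lemma~\ref{nec} buys is a statement about $C_{\mathcal{T}}Q(\tilde A,\tilde B)$ for arbitrary $\mathcal{T}$, which is then reused verbatim for the necessity direction of the target-controllability result (Theorem~\ref{theo4}), so the detour is amortized. Two further points in your write-up are improvements rather than gaps: your explicit spectral bookkeeping ($\textrm{rank}(\tilde A)\le\textrm{t--rank}(\bar A)=k$ plus symmetry, so the $k$ controllable nonzero modes exhaust all nonzero modes) makes precise a step the paper leaves implicit inside Lemma~\ref{lem4}'s factorization of the characteristic polynomial as $s^{n-k}\varphi_{\tilde A}(s)$; and your bidirectional identification of condition~1 with irreducibility is correct except in the degenerate case $\bar B=\mathbf{0}$, where $\mathcal{S}_0=\mathcal{X}$ and the block split with $1\le q<n$ is unavailable --- there uncontrollability is trivial anyway, and the paper's alternative necessity argument (an input-unreachable state yields a zero row of $Q(\tilde A,\tilde B)$) sidesteps this edge case uniformly.
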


Notice that Conditions \emph{1)} and \emph{2)} in Theorem~\ref{theo3} admits a similar form as the conditions for structural controllability (see, for example~\cite{lin1974structural}). Subsequently, if a structural pair with symmetric parameter dependencies is structurally controllable, then the structural pair with the same structural pattern without symmetric parameter dependencies, will also be structurally controllable. 
However, the converse cannot be trivially derived due to symmetric parameter dependencies in Assumption~\ref{assumption_sym}.

\subsection{Structural Target Controllability}
We now extend the solution approach in Theorem~\ref{theo3} to establish graph-theoretic necessary and sufficient conditions for structural target controllability of the given structural pair $(\bar{A},\bar{B})$ and target set $\mathcal{T}$.


\begin{thm}\label{theo4}
Consider a structural pair $(\bar{A},\bar{B})$, with $\bar{A}$ being symmetrically structured, and a target set $\mathcal{T}\subseteq[n]$. Let $\mathcal{X_T}$ be the set of state vertices corresponding to $\mathcal{T}$ in $\mathcal{D}(\bar{A},\bar{B}).$ The structural pair $(\bar{A}, \bar{B})$ is structurally target controllable with respect to $\mathcal{T}$, if and only if, the following conditions hold simultaneously in $\mathcal{D}(\bar{A},\bar{B}):$
	\begin{enumerate}
			\item all the states vertices in $\mathcal{X_T}$ are input-reachable;
			\item $\left|\mathcal{N(S)}\right|\ge\left|\mathcal{S}\right|$, $\forall \mathcal{S}\subseteq \mathcal{X_T}$.
		\end{enumerate}
\end{thm}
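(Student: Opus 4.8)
The plan is to translate structural target controllability into a spectral (PBH-type) statement about the modes of a numerical realization $(\tilde A,\tilde B)$, and then to control the zero and nonzero modes separately using Corollary~\ref{coro} and Lemma~\ref{theo2}. The starting observation is that $Q_{\mathcal T}(\tilde A,\tilde B)$ fails to have full row rank precisely when there is a nonzero $v\in\mathbb R^n$ with $\mathrm{supp}(v)\subseteq\mathcal T$ lying in the orthogonal complement $\mathcal R^\perp$ of the reachable subspace $\mathcal R=\mathrm{Im}[\tilde B,\tilde A\tilde B,\dots,\tilde A^{n-1}\tilde B]$; indeed, writing such a $v$ as $C_{\mathcal T}^\top w$, the identity $w^\top Q_{\mathcal T}=v^\top[\tilde B,\tilde A\tilde B,\dots]$ makes the equivalence immediate. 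Because $\tilde A$ is symmetric, it is orthogonally diagonalizable and $\mathcal R^\perp$ is $\tilde A$-invariant, so $\mathcal R^\perp$ decomposes as a direct sum of its intersections with the eigenspaces, and these intersections are exactly the uncontrollable modes identified by the test in Proposition~\ref{pbh}.

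For necessity I would argue directly at the level of every realization. If some target vertex $x_i\in\mathcal X_{\mathcal T}$ is not input-reachable, then no directed walk joins any input to $x_i$, so the $i$-th row of $\tilde A^\ell\tilde B$ vanishes for every $\ell$; hence the corresponding row of $Q_{\mathcal T}$ is identically zero and Condition~1 is necessary. For Condition~2, suppose $|\mathcal N(\mathcal S)|<|\mathcal S|$ for some $\mathcal S\subseteq\mathcal X_{\mathcal T}$. The rows of $[\tilde A,\tilde B]$ indexed by $\mathcal S$ have their supports contained in the columns indexed by $\mathcal N(\mathcal S)$, a set of size strictly smaller than $|\mathcal S|$; thus these rows are linearly dependent, giving a nonzero $w$ supported on $\mathcal S\subseteq\mathcal T$ with $w^\top[\tilde A,\tilde B]=\mathbf 0$. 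By symmetry $w^\top\tilde A=\mathbf 0$ yields $w\in\ker\tilde A$, so $w^\top\tilde A^\ell\tilde B=\mathbf 0$ for all $\ell$, and therefore $w$ is a target-supported vector in $\mathcal R^\perp$. This holds for every realization, so the pair is not structurally target controllable and Condition~2 is necessary.

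For sufficiency I would first use Assumption~\ref{assumption_sym} to reduce to the input-reachable part. Letting $\mathcal X_R$ be the set of input-reachable state vertices, symmetry forces the absence of edges between $\mathcal X_R$ and $\mathcal X\setminus\mathcal X_R$ and the absence of input edges into $\mathcal X\setminus\mathcal X_R$; consequently every realization is block-diagonal with $\tilde B$ supported on $\mathcal X_R$, the controllability matrix has zero rows outside $\mathcal X_R$, and, using Condition~1 so that $\mathcal X_{\mathcal T}\subseteq\mathcal X_R$, target controllability is equivalent to that of the reduced pair $(\bar A_{RR},\bar B_R)$. Since all states of the reduced pair are input-reachable and $\bar A_{RR}$ is symmetric, the reduced pair is irreducible, so Lemma~\ref{theo2} applies: outside a proper variety $V$, every nonzero eigenvalue of $\tilde A_{RR}$ is simple and controllable. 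Thus $\mathcal R^\perp\subseteq\ker\tilde A_{RR}$ generically, and by the spectral reformulation target controllability now fails only if some nonzero target-supported $v$ satisfies $v^\top[\tilde A_{RR},\tilde B_R]=\mathbf 0$, i.e. only if the rows of $C_{\mathcal T}[\tilde A_{RR},\tilde B_R]$ are linearly dependent. Condition~2 together with Corollary~\ref{coro} gives $\textrm{g--rank}(C_{\mathcal T}[\bar A_{RR},\bar B_R])=k$, so these rows are independent outside a second proper variety $V'$. Any realization with parameters outside $V\cup V'$ is then target controllable, establishing sufficiency.

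The main obstacle, and the step I would develop most carefully, is the spectral reformulation together with the isolation of the zero mode: I must ensure that after removing the generically simple, controllable nonzero modes via Lemma~\ref{theo2}, the only surviving way target controllability can fail is through a target-supported null vector of $[\tilde A_{RR},\tilde B_R]$, which is exactly the configuration excluded by Corollary~\ref{coro}. Care is also needed to verify that the decoupling of the non-reachable block genuinely uses symmetry (a directed counterpart would not decouple) and that the union $V\cup V'$ of the two exceptional sets remains a proper variety, so that a valid realization exists.
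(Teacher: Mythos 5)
Your proposal is correct and follows essentially the same route as the paper's proof: reduce to the input-reachable (irreducible) part using symmetry, invoke Lemma~\ref{theo2} so that generically the left null space of the controllability matrix lies in $\ker\tilde{A}$, and then rule out a target-supported zero-mode vector via $\textrm{g--rank}(C_{\mathcal{T}}[\bar{A},\bar{B}])=|\mathcal{T}|$ from Corollary~\ref{coro}. Your only deviations are cosmetic: for necessity of Condition~2 you argue row dependence of $C_{\mathcal{T}}[\tilde{A},\tilde{B}]$ directly from the support structure rather than through the rank bound of Lemma~\ref{nec}, and you make the block-diagonal decoupling of unreachable states explicit where the paper argues it informally.
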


\begin{remark}
Condition 2) in Theorem~\ref{theo4} can be verified using local topological information in the network. In particular, this condition is satisfied if there exists a matching in the bipartite graph $\mathcal{B}(\mathcal{S}_1,\mathcal{S}_2,\mathcal{E}_{\mathcal{S}_1,\mathcal{S}_2})$ associated with $\mathcal{D}(\bar{A},\bar{B})$, where $\mathcal{S}_1=\mathcal{X\cup U}$ and $\mathcal{S}_2=\mathcal{X_T}$, such that all vertices in $\mathcal{S}_2$ are right-matched. The existence of such a matching can be verified in $\mathcal{O}(\sqrt{|\mathcal{S}_1\cup \mathcal{S}_2|}|\mathcal{E}_{\mathcal{S}_1, \mathcal{S}_2}|)$ time \cite[\S 23.6]{cormen2009introduction}.
\end{remark}
{Through the proof of Theorem~\ref{theo4}, we notice that the characterization of structural target controllability relies on the assumption that the state matrix is symmetric. }More specifically, since the state matrix is symmetric, the eigenvectors of the state matrix form a complete basis of the state space, which allows us to generalize the PBH test in the context of target controllability problems. On the contrary, when the system is characterized by a directed network, the state matrix $A$ is, in general, non-diagonalizable, which prevents us from generalizing PBH test to characterize the target controllability problems - see \cite[Example 3]{murota1990note} for a reference. 

In addition, the proof of Theorem~\ref{theo4} suggests that, even for the case where $\bar{A}$ is not symmetrically structured, the violation of either Conditions~\emph{1)} or \emph{2)} results in that $(\bar{A},\bar{B})$ is not structurally target controllable. Therefore, in general, when the structured matrix $\bar{A}\in\{0,\star\}^{n\times n}$ is not symmetrically structured, the Conditions \emph{1)} and \emph{2)} in Theorem~\ref{theo4} are necessary but not sufficient conditions for the structural target controllability of the pair $(\bar{A},\bar{B})$.

\section{Illustrative Examples}\label{sec:example}
In this section, we provide an example to illustrate our necessary and sufficient conditions 
in Theorem~\ref{theo3} and Theorem~\ref{theo4}. We consider a symmetrically structured system with $10$ states and $2$ inputs modeled by an undirected network with unknown link weights. The structural representations of its state and input matrix are denoted by $\bar{A} \in \{0,\star\}^{10\times 10}$ and $\bar{B}\in \{0,\star\}^{10 \times 2},$ as follows. 
\begin{equation*}\label{eq:SystemExample}\footnotesize
\bar{A}=\left[\begin{smallmatrix}
0&a_{12}&0&a_{14}&a_{15}&0&0&0&0&0\\
a_{12}&0&a_{23}&0&0&0&0&0&0&0\\
0&a_{23}&0&a_{34}&0&0&0&0&0&0\\
a_{14}&0&a_{34}&0&0&0&0&0&0&0\\
a_{15}&0&0&0&0&0&a_{57}&0&0&0\\
0&0&0&0&0&a_{66}&a_{67}&0&0&0\\
0&0&0&0&a_{57}&a_{67}&0&0&a_{79}&0\\
0&0&0&0&0&0&0&0&a_{89}&0\\
0&0&0&0&0&0&a_{79}&a_{89}&0&a_{910}\\
0&0&0&0&0&0&0&0&a_{910}&0
\end{smallmatrix}\right],\bar{B}=\left[\begin{smallmatrix}
0,b_{12}\\b_{21},0\\0,0\\0,0\\0,b_{52}\\0,0\\0,0\\0,0\\0,0\\0,0
\end{smallmatrix}\right].
\end{equation*}
In addition, we let the target set be $\mathcal{T}=\{2,6,8\}.$ Subsequently, $C_{\mathcal{T}}$, defined according to~\eqref{CT}, equals to 
		

\begin{equation*}\footnotesize
	C_{\mathcal{T}}=\left[\begin{smallmatrix}
	0&1&0&0&0&0&0&0&0&0\\
	0&0&0&0&0&1&0&0&0&0\\
	0&0&0&0&0&0&0&1&0&0
	\end{smallmatrix}\right].
\end{equation*}
 We also associate the structural pair $(\bar{A},\bar{B})$ with the digraph $\mathcal{D}(\bar{A},\bar{B})=(\mathcal{X}\cup\mathcal{U}, \mathcal{E}_{\mathcal{X},\mathcal{X}}\cup \mathcal{E}_{\mathcal{U},\mathcal{X}})$ as depicted in Figure~\ref{fig2},
		\begin{figure}[t]
			\centering
			\includegraphics[width=0.2\textwidth]{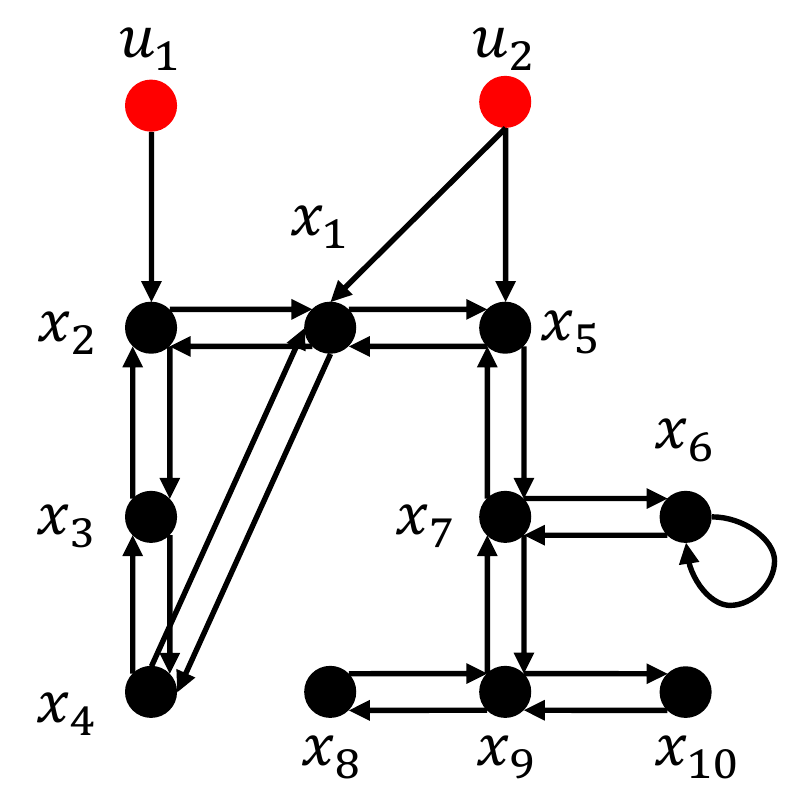}
			\caption{the digraph representation of the structural pair $(\bar A, \bar B)$, where the red and black vertices represent input and state vertices, respectively. The black arrows represent edges in $\mathcal{D}(\bar{A},\bar{B})$.}
			\label{fig2}
		\end{figure}
	 
		\noindent where $\mathcal{X}=\{x_1,\dots,x_{10}\}$, $\mathcal{U}=\{u_1,u_2\}$ and $\mathcal{X_T}=\{x_2,x_6,x_8\}$. 
		
		Notice that by letting $\mathcal{S}=\{x_8,x_{10}\}$, we have $\mathcal{N(S)}=\{x_9\}$. As a result, according to Theorem~\ref{theo3}, $\exists \mathcal{S}\subseteq\mathcal{X}$, $\left|\mathcal{N(S)}\right|<\left|\mathcal{S}\right|$ implies that the system is not structurally controllable. 
		However, since all the vertices in $\mathcal{X_T}$ are input-reachable, and $|\mathcal{N(S)}|\ge|\mathcal{S}|,\ \forall\mathcal{S\subseteq X_T}$, by Theorem~\ref{theo4}, $(\bar{A},\bar{B})$ is structurally target controllable with respect to $\mathcal{T}$. This example also shows that, if the input-reachability of the vertices in $\mathcal{X_T}$ is guaranteed, then the structural target controllability in undirected networks can be verified by only local topological information.

\section{Conclusions}\label{sec:con}
{In this paper, we study the problem of characterizing structural target controllability in undirected networks with unknown link weights. 
We achieved this goal by first characterizing the generic properties of symmetrically structured matrices. We then proposed a necessary and sufficient condition for structural controllability of undirected networks with multiple control inputs. 
Finally, we provided a graph-theoretic necessary and sufficient condition for structural target controllability of undirected networks. In the future, we will use the conditions to implement algorithms for designing minimum number of input actuations to ensure the structural target controllability of undirected networks.

\section*{Appendix}
\subsection{Proof of Lemma~\ref{symmetric} and related results}
Before we proceed to the proof of Lemma~\ref{symmetric}, we introduce Proposition~\ref{termrank}, which lays the foundation for the proof of Lemma~\ref{symmetric}.
\begin{prop}[{\cite[\S1.2]{murota2012systems}}]\label{termrank}
	Given a (symmetrically) structured matrix $\bar{M}\in\{0,\star\}^{n\times m}$ and $\mathcal{B}(\mathcal{S}_1, \mathcal{S}_2,\mathcal{E}_{\mathcal{S}_1,\mathcal{S}_2})$, where $\mathcal{S}_1=\{v_1,\dots, v_m\}$, $\mathcal{S}_2=\{v'_1,\dots,v'_n\}$ and $\mathcal{E}_{\mathcal{S}_1,\mathcal{S}_2}=\{\{v_i,v'_j\}\colon [\bar{M}]_{ji}\ne0,v_i\in\mathcal{S}_1,v'_j\in\mathcal{S}_2\}$, then $\emph{t--rank}(\bar{M})=n$ if and only if $\left|\mathcal{N_B(S)}\right|\ge\left|\mathcal{S}\right|$ for all $\mathcal{S}\subseteq\mathcal{S}_2$.
\end{prop}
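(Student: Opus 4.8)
The plan is to recognize that Proposition~\ref{termrank} is an instance of Hall's marriage theorem, once the combinatorial definition of term rank is translated into the language of bipartite matchings. First I would make explicit the correspondence between the $\star$-entries of $\bar{M}$ and the edges of $\mathcal{B}(\mathcal{S}_1,\mathcal{S}_2,\mathcal{E}_{\mathcal{S}_1,\mathcal{S}_2})$: by construction the edge $\{v_i,v'_j\}$ belongs to $\mathcal{E}_{\mathcal{S}_1,\mathcal{S}_2}$ if and only if $[\bar{M}]_{ji}\ne 0$, where $v_i\in\mathcal{S}_1$ indexes the $i$-th column and $v'_j\in\mathcal{S}_2$ indexes the $j$-th row. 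Hence a set of edges $\{\{v_{i_\ell},v'_{j_\ell}\}\}_{\ell=1}^{k}$ forms a matching (no shared endpoints) exactly when the columns $i_1,\dots,i_k$ are distinct, the rows $j_1,\dots,j_k$ are distinct, and every entry $[\bar{M}]_{j_\ell i_\ell}$ equals $\star$. By the definition of term rank, this shows that $\textrm{t--rank}(\bar{M})$ equals the cardinality of a maximum matching in $\mathcal{B}$.

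With this identification in hand, the next step is to observe that $\textrm{t--rank}(\bar{M})=n$ holds if and only if there is a matching of size $n=|\mathcal{S}_2|$, i.e. a matching that right-matches (saturates) every vertex of $\mathcal{S}_2$. Thus the claim reduces to the assertion that $\mathcal{B}$ admits a matching saturating $\mathcal{S}_2$ if and only if $|\mathcal{N_B(S)}|\ge|\mathcal{S}|$ for all $\mathcal{S}\subseteq\mathcal{S}_2$. This is precisely Hall's marriage theorem applied with $\mathcal{S}_2$ as the side to be saturated, so I would invoke it directly. The necessity direction ($\Rightarrow$) is immediate: any matching saturating $\mathcal{S}_2$ sends each $\mathcal{S}\subseteq\mathcal{S}_2$ injectively into $\mathcal{N_B(S)}$ through its matched partners, forcing $|\mathcal{N_B(S)}|\ge|\mathcal{S}|$.

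For completeness (the statement is quoted from~\cite[\S1.2]{murota2012systems}, whose proof may simply appeal to Hall's theorem), I would supply the sufficiency direction ($\Leftarrow$) by the standard induction on $|\mathcal{S}_2|$. One splits into the case where every proper nonempty $\mathcal{S}\subsetneq\mathcal{S}_2$ satisfies the strict inequality $|\mathcal{N_B(S)}|>|\mathcal{S}|$ --- where one matched edge may be removed and the hypothesis re-applied --- and the case where some proper $\mathcal{S}$ is \emph{tight} with $|\mathcal{N_B(S)}|=|\mathcal{S}|$, where one matches $\mathcal{S}$ into $\mathcal{N_B(S)}$ and matches $\mathcal{S}_2\setminus\mathcal{S}$ into the remaining vertices, verifying that the Hall condition is inherited by both subproblems.

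The main obstacle is not conceptual but bookkeeping: I must keep the transpose in the edge definition straight, so that $\mathcal{S}_1$ consistently plays the role of columns and $\mathcal{S}_2$ the role of rows, and confirm that ``$\textrm{t--rank}(\bar M)=n$'' corresponds to saturating $\mathcal{S}_2$ (the row side) rather than $\mathcal{S}_1$. In particular, when $n>m$ the equivalence must degenerate correctly, which it does, since taking $\mathcal{S}=\mathcal{S}_2$ then violates Hall's condition and simultaneously forces $\textrm{t--rank}(\bar{M})<n$. Once the matching/term-rank dictionary is fixed, the result is exactly Hall's theorem and requires no new ideas.
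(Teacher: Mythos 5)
Your proof is correct. The paper does not actually prove Proposition~\ref{termrank}---it is quoted directly from \cite[\S1.2]{murota2012systems}---and your argument (identifying the $\star$-entries on distinct rows and columns with matchings in $\mathcal{B}$, so that $\textrm{t--rank}(\bar M)=n$ means a matching saturating the row side $\mathcal{S}_2$, then invoking Hall's marriage theorem, with the standard two-case induction for sufficiency) is precisely the canonical argument behind the cited result, including the correct degenerate behavior when $n>m$.
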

\begin{proof}[Proof of Lemma~\ref{symmetric}]
	First, we show the sufficiency of the theorem. Notice that the generic-rank of $C_{\mathcal{T}}\bar{A}$ equals $k$, if and only if, there exists a $k$-by-$k$ non-zero minor in $C_{\mathcal{T}}\bar{A};$ hence, it suffices to find that minor. Since $\left|\mathcal{N(S)}\right|\ge \left|\mathcal{S}\right|$, $\forall \mathcal{S}\subseteq \mathcal{X_T}$, there exists $k$ entries that lie on distinct rows and distinct columns of $C_{\mathcal{T}}\bar{A}$ according to Proposition~\ref{termrank}. As a result, we can select rows indexed by $\mathcal{T}=\{{i_1},\dots,{i_k}\}$ and columns indexed ${j_1},\cdots,{j_k}$ in $\bar{A}$ such that $\{[\bar{A}]_{i_\ell j_\ell}\}_{\ell=1}^k
	$ lies on distinct rows and distinct columns. Next, we consider the following two cases.
	
	On one hand, if $\{j_1,\dots,j_k \}=\{i_1,\dots,i_k\},$ then $M = C_{\mathcal{T}}\bar{A}C_{\mathcal{T}}^\top$ is a square submatrix of $\bar{A}.$ We consider a particular numerical realization $\tilde{A}$ of $\bar{A}$, as follows. Let $[\tilde{A}]_{ij} \neq 0$ for all $(i,j) \notin \{(i_\ell, j_\ell) : \ell \in [k]\},$  $[\tilde{A}]_{ij} = [\tilde{A}]_{ji}$, and $[\tilde{A}]_{ij}=0$ otherwise. Subsequently, by computing the determinant
	, $\det (C_{\mathcal{T}}\tilde{A}C_{\mathcal{T}}^\top) = \textrm{sgn}(\sigma_1) \Pi_{\ell = 1}^k [\tilde{A}]_{i_\ell j_\ell} + \textrm{sgn}(\sigma_2) \Pi_{\ell = 1}^k [\tilde{A}]_{j_\ell i_\ell},$ where $\textrm{sgn}(\sigma_1)$ and $\textrm{sgn}(\sigma_2)$ are the signatures of the permutations $\sigma_1 = \{(i_\ell, j_\ell) : \ell \in [k]\},$ and $\sigma_2 = \{(j_\ell, i_\ell) : \ell \in [k]\},$ respectively. Notice that if $\textrm{sgn}(\sigma_1) = \textrm{sgn}(\sigma_2),$ then it follows that $\det (C_{\mathcal{T}}\tilde{A}C_{\mathcal{T}}^\top) \neq 0.$ Furthermore, if $\{ \tilde{A} : \det (C_{\mathcal{T}}\tilde{A}C_{\mathcal{T}}^\top) = 0\}$ is a proper variety, we have that $M$ admits an $k$-by-$k$ non-zero minor generically. Thus, the generic-rank of $C_{\mathcal{T}}\bar{A}$ equals to $k.$
	
	On the other hand, when $\{j_1,\dots,j_k\}\ne\{i_1,\cdots,i_k\}$, it sufficies to show there exists a numerical realization $\tilde{A}$ such that $\det ([\tilde{A}]_{i_1,\cdots,i_k}^{j_1,\cdots,j_k})\ne0$. We consider a numerical realization $\tilde{A}$ by assigning distinct real values to $\star$-entries corresponding to $\{[\bar{A}]_{i_\ell j_\ell}\}_{\ell=1}^k$ while keeping $[\tilde{A}]_{ij}=[\tilde{A}]_{ji}$, and assigning 0 otherwise. 
	Without loss of generality, we can permute $\{\ell\}_{\ell=1}^k$ such that for each $[\bar{A}]_{i_{\ell_r}j_{\ell_r}}\in\{[\bar{A}]_{i_{\ell_r}j_{\ell_r}}\}_{r=1}^p$, $[\bar{A}]_{j_{\ell_r}i_{\ell_r}}$ is not in matrix $[\bar{A}]_{i_1,\cdots,i_k}^{j_1,\cdots,j_k}$, and for each $[\bar{A}]_{i_{\ell_r}j_{\ell_r}}\in\{[\bar{A}]_{i_{\ell_r}j_{\ell_r}}\}_{r=p+1}^k$, $[\bar{A}]_{j_{\ell_r}i_{\ell_r}}$ is in matrix $[\bar{A}]_{i_1,\cdots,i_k}^{j_1,\cdots,j_k}$. We declaim that there is only one nonzero entry in either the $i_{\ell_r}$th row or $j_{\ell_r}$th column, $\forall r\in[p]$, otherwise it contradicts that $\{[\bar{A}]_{i_\ell j_\ell}\}_{\ell=1}^k$ are in distinct rows and distinct columns of $[\bar{A}]$. Thus, we compute $\det ([\tilde{A}]_{i_1,\cdots,i_k}^{j_1,\cdots,j_k})$,
	\begin{equation}\small\label{product}
	\det ([\tilde{A}]_{i_1,\cdots,i_k}^{j_1,\cdots,j_k})=(\prod_{r=1}^{p}[\tilde{A}]_{i_{\ell_r}j_{\ell_r}})\cdot \det([\tilde{A}]_{i_{\ell_{p+1}},\cdots,i_{\ell_{k}}}^{j_{\ell_{p+1}},\cdots,j_{\ell_k}})\ne 0,
	\end{equation}
	where $\det([\tilde{A}]_{i_{\ell_{p+1}},\cdots,i_{\ell_{k}}}^{j_{\ell_{p+1}},\cdots,j_{\ell_k}})\ne0$ is true because of the reasoning in the first case $\{i_1,\cdots,i_k\}=\{j_1,\cdots,j_k\}$. Thus, there exists numerical realization such that $\det([\tilde{A}]_{i_1,\cdots,i_k}^{j_1,\cdots,j_k})\ne0$.

	Next, we show the necessity of the theorem by contrapositive. We assume that there exists $\mathcal{S}\subseteq\mathcal{X_T}$, such that $\left|\mathcal{N(S)}\right|<\left|\mathcal{S}\right|.$ Then, by Proposition~\ref{termrank}, there does not exist $k$ entries that lie on the distinct rows and distinct columns of $C_{\mathcal{T}}\bar{A},$ which implies $\textrm{g--rank}(C_{\mathcal{T}}\bar{A})<k$.
\end{proof}


\subsection{Proof of Corollary~\ref{coro}}\label{Acoro}
	\begin{proof}
	Suppose $\left|\mathcal{N	(S)}\right|\ge\left|\mathcal{S}\right|$, $\forall \mathcal{S}\subseteq\mathcal{X_T}$, then, by Proposition~\ref{termrank}, there exist $k$ entries, $\{[\bar{A},\bar{B}]_{i_\ell j_\ell}\}_{\ell=1}^k$, such that they are all $\star$-entries which lie on distinct rows and distinct columns of $[\bar{A},\bar{B}]$. 
	Among those $k$ entries, suppose $\{[\bar{A},\bar{B}]_{i_\ell j_\ell}\}_{\ell=1}^q$ are in columns of $\bar{A}$, and $\{[\bar{A},\bar{B}]_{i_\ell j_\ell}\}_{\ell={q+1}}^k$ are in the columns of $\bar{B}$. By Lemma~\ref{symmetric}, there exists a numerical realization $\tilde{A}$, such that $\det([\tilde{A},\tilde{B}]_{i_{1},\dots,i_q}^{j_{1},\dots,j_q})\ne0$. Since $\bar{B}$ is a structured matrix, there exists a numerical realization $\tilde{B}$ such that $\det([\tilde{A},\tilde{B}]_{i_{q+1},\dots,i_k}^{j_{q+1},\dots,j_k})\ne0$ Hence, there exists a numerical realization $[\tilde{A},\tilde{B}]$ with
	\begin{equation*}\label{minor}\small
	\begin{aligned}
	\det([\tilde{A},\tilde{B}]_{i_1,\dots,i_k}^{j_1,\dots,j_k})&=\det([\tilde{A},\tilde{B}]_{i_1,\dots,i_q}^{j_1,\dots,j_q})\det([\tilde{A},\tilde{B}]_{i_{q+1},\dots,i_k}^{j_{q+1},\dots,j_k})\\&\ne0,
	\end{aligned}
	\end{equation*}
	
\noindent which implies that $\textrm{g--rank}(C_{\mathcal{T}}\left[\bar{A},\bar{B}\right])=k$.
\end{proof}


\subsection{Proof of Lemma~\ref{lem4}}
We introduce Proposition~\ref{prop}, Proposition~\ref{hoff} and Lemma~\ref{lem3} to support the proof of Lemma~\ref{lem4}.
\begin{prop}[{\cite[\S 2.1]{barnett1971matrices}}]\label{prop}
	Let $\varphi_1(s)$ and $\varphi_2(s)$ be polynomials in $s$ with $\varphi_1(s)=\sum_{i=0}^{n_1}a_is^{{n_1}-i}$, and $\varphi_2(s)=\sum_{i=0}^{n_2}b_is^{n_2-i}$, respectively. Let $R(\varphi_1,\varphi_2)$ be defined as
	\begin{equation}\label{sylv}\small
	R(\varphi_1,\varphi_2)=\det\left(\left[\begin{smallmatrix}
	a_{n_1} & a_{n_1-1} & \cdots & a_0 & 0 & \cdots & 0\\
	0 & a_{n_1} & \cdots & a_{1} & a_{0} & \cdots & 0\\
	\vdots & \vdots & \ddots & \vdots & \vdots & \ddots & \vdots\\
	0 & 0 &\cdots & a_{n_1} & a_{n_1-1} & \cdots & a_{0}\\\hline
	0 & 0 & \cdots &		  &         & \cdots & b_0\\
	\vdots & \vdots & \iddots & \vdots & \vdots & \iddots & \vdots\\
	0 & b_{n_2} & \cdots & b_1& b_0& \cdots & 0\\
	b_{n_2} & b_{n_2-1} & \cdots & b_0& 0& \cdots & 0  
	\end{smallmatrix}\right]\right).
	\end{equation}
	If $a_{n_1}\ne0$ and $b_{n_2}\ne 0$, then $\varphi_1(s)$ and $\varphi_2(s)$ have a nontrivial common factor if and only if the $R(\varphi_1,\varphi_2)=0.$
\end{prop}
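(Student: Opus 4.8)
The plan is to prove the classical resultant criterion: the Sylvester determinant $R(\varphi_1,\varphi_2)$ vanishes precisely when $\varphi_1$ and $\varphi_2$ share a nonconstant common factor. First I would read off the structure of the matrix in \eqref{sylv}: it is the $(n_1+n_2)\times(n_1+n_2)$ Sylvester matrix whose rows are successively shifted copies of the coefficient vectors $(a_{n_1},\dots,a_0)$ (appearing in $n_2$ rows) and $(b_{n_2},\dots,b_0)$ (appearing in $n_1$ rows). The hypotheses $a_{n_1}\neq 0$ and $b_{n_2}\neq 0$ identify these as the leading coefficients, so that $\deg\varphi_1=n_1$ and $\deg\varphi_2=n_2$ hold exactly. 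The strategy is to characterize the singularity of this matrix through the existence of a nontrivial polynomial identity $p\varphi_1+q\varphi_2=0$ with bounded degrees, and then to match that identity to a common factor.

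The algebraic heart of the argument is the equivalence: $\varphi_1$ and $\varphi_2$ have a nonconstant common factor if and only if there exist polynomials $p,q$, not both zero, with $\deg p<n_2$ and $\deg q<n_1$, such that $p\varphi_1+q\varphi_2=0$. For the forward direction, I would write $\varphi_1=d\psi_1$ and $\varphi_2=d\psi_2$ with $\deg d\geq 1$, so that $\psi_2\varphi_1-\psi_1\varphi_2=0$ with $\psi_2,\psi_1$ nonzero and $\deg\psi_2<n_2$, $\deg\psi_1<n_1$; taking $p=\psi_2$ and $q=-\psi_1$ gives the relation. For the converse, if $p\varphi_1=-q\varphi_2$ while $\gcd(\varphi_1,\varphi_2)=1$, then $\varphi_1\mid q\varphi_2$ forces $\varphi_1\mid q$; since $\deg q<n_1=\deg\varphi_1$ this forces $q=0$, hence $p\varphi_1=0$ gives $p=0$, contradicting that they are not both zero. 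This step uses only unique factorization in $\mathbb{C}[s]$.

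Finally I would translate the relation into linear algebra. Encoding $p=\sum_{j=0}^{n_2-1}p_js^j$ and $q=\sum_{j=0}^{n_1-1}q_js^j$ introduces $n_2+n_1$ unknowns, and $p\varphi_1+q\varphi_2$ is a polynomial of degree at most $n_1+n_2-1$, so setting its $n_1+n_2$ coefficients to zero yields a homogeneous square linear system whose coefficient matrix is exactly $R(\varphi_1,\varphi_2)$, up to a transpose and a row/column permutation that alters the determinant only by sign. A nontrivial $(p,q)$ in the kernel therefore exists if and only if $R(\varphi_1,\varphi_2)=0$; chaining the equivalences (common factor $\iff$ nontrivial $(p,q)$ $\iff$ $R=0$) completes the proof. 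I expect the main obstacle to be bookkeeping rather than conceptual: one must verify carefully that the coefficient matrix of this system coincides with the matrix drawn in \eqref{sylv} (matching the shift pattern, the block sizes $n_2$ and $n_1$, and the reflected lower block, all up to an overall sign), and confirm that $a_{n_1}\neq 0$ and $b_{n_2}\neq 0$ are precisely what make the system square of size $n_1+n_2$ and validate the degree count $\deg q<\deg\varphi_1$ used in the converse.
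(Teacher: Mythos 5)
First, a point of reference: the paper never proves this proposition. It is imported as a black box from Barnett's book (the citation in the proposition header), and the appendix only invokes it, via the resultant $R(\varphi_{\tilde A},\varphi'_{\tilde A})$ and $R(\varphi_{\tilde A},\psi_{\tilde A,\tilde B})$, in the proofs of Lemma~\ref{lem4} and Lemma~\ref{lem5}. So there is no in-paper argument to compare against; what you have written is the standard textbook proof of the Sylvester resultant criterion, and its skeleton is sound: the equivalence between a nonconstant common factor and a nontrivial relation $p\varphi_1+q\varphi_2=0$ with $\deg p<n_2$, $\deg q<n_1$ (via unique factorization in the converse), and the translation of that relation into a homogeneous square linear system of size $n_1+n_2$ whose coefficient matrix is the matrix in \eqref{sylv} up to transpose and a row permutation of the reflected lower block, which changes the determinant only by a sign and hence not its vanishing.

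There is, however, one step you gloss over that does not survive a literal reading of the statement. You assert that the hypotheses $a_{n_1}\neq 0$ and $b_{n_2}\neq 0$ ``identify these as the leading coefficients,'' but with the expansion as printed, $\varphi_1(s)=\sum_{i=0}^{n_1}a_is^{n_1-i}$, the leading coefficient is $a_0$ and $a_{n_1}=\varphi_1(0)$ is the \emph{constant} term. Your converse direction genuinely needs $\deg\varphi_1=n_1$ exactly (so that $\varphi_1\mid q$ together with $\deg q<n_1$ forces $q=0$), i.e., it needs $a_0\neq 0$ and $b_0\neq 0$, and the printed hypothesis does not supply this. Indeed, under the literal reading the ``only if'' direction of the proposition fails: take $n_1=n_2=1$ and $\varphi_1=\varphi_2=1$, so $a_0=b_0=0$ and $a_1=b_1=1$; then $R=\det\left[\begin{smallmatrix}1&0\\1&0\end{smallmatrix}\right]=0$ although the polynomials are coprime. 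The explanation is that the matrix in \eqref{sylv} is the standard Sylvester matrix of the \emph{reversed} polynomials $s^{n_1}\varphi_1(1/s)$ and $s^{n_2}\varphi_2(1/s)$ (whose leading coefficients are precisely $a_{n_1},b_{n_2}$), and these can share the factor $s$ --- a common ``root at infinity'' of the originals --- without the originals sharing anything. So you must either state that you are correcting an indexing slip and prove the proposition under $a_0\neq0$, $b_0\neq0$ (which is what your argument actually does), or run your argument on the reversed polynomials and add the bijection between their nonconstant common factors and those of $\varphi_1,\varphi_2$, which holds exactly when the true degrees are $n_1,n_2$. In the paper's only applications the issue is invisible, since $\varphi_{\tilde A}$ in \eqref{phi} is monic with $a_{n-k}\neq0$ on $V_0^c$ and its derivative has leading coefficient $k\neq0$, so both readings of the hypothesis hold simultaneously; but as a freestanding proof of the proposition as printed, you should make explicit which convention you are using.
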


\begin{prop}[Hoffman-Wielandt Theorem {\cite[\S6.3]{horn1990matrix}}]\label{hoff}
	Given $n\times n$ symmetric matrices $A$ and $E$, let $\lambda_1,\dots,\lambda_n$ be the eigenvalues of $A$, and $\hat{\lambda}_1,\dots,\hat{\lambda}_n$ be the eigenvalues of $A+E$. There is a permutation $\sigma(\cdot)$ of the integers $\{1,\dots,n\}$ such that
	\begin{equation}\label{hoffequation}\small
	\sum_{i=1}^n(\hat{\lambda}_{\sigma(i)}-\lambda_i)^2\le\norm{E}_F^2,
	\end{equation}
	
	\noindent where $\norm{E}_F = \sqrt{\textrm{tr}(E E^\top)}.$
\end{prop}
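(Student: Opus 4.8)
The plan is to reduce the inequality to a statement about doubly stochastic matrices and then to invoke Birkhoff's theorem. First I would use the symmetry hypothesis to write the real spectral decompositions $A = U\Lambda U^\top$ and $A+E = V\hat{\Lambda} V^\top$, where $U,V$ are orthogonal and $\Lambda = \textrm{diag}(\lambda_1,\dots,\lambda_n)$, $\hat{\Lambda} = \textrm{diag}(\hat{\lambda}_1,\dots,\hat{\lambda}_n)$. Writing $E = V\hat{\Lambda} V^\top - U\Lambda U^\top$ and exploiting the invariance of the Frobenius norm under left and right multiplication by orthogonal matrices, I would set $W = U^\top V$ (which is itself orthogonal) and compute $U^\top E V = W\hat{\Lambda} - \Lambda W$, so that $\norm{E}_F = \norm{U^\top E V}_F = \norm{W\hat{\Lambda} - \Lambda W}_F$.

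Next I would expand this norm entrywise. The $(i,j)$ entry of $W\hat{\Lambda} - \Lambda W$ equals $W_{ij}(\hat{\lambda}_j - \lambda_i)$, whence
\[
\norm{E}_F^2 = \sum_{i,j} W_{ij}^2 (\hat{\lambda}_j - \lambda_i)^2.
\]
The crucial observation is that the matrix $P$ with entries $P_{ij} = W_{ij}^2$ is doubly stochastic: since $W$ is orthogonal, every row and every column is a unit vector, so $P_{ij}\ge 0$ and $\sum_j P_{ij} = \sum_i P_{ij} = 1$. Thus $\norm{E}_F^2 = \sum_{i,j} P_{ij} c_{ij}$ with $c_{ij} = (\hat{\lambda}_j - \lambda_i)^2$, exhibiting the squared Frobenius norm as the value of a linear functional evaluated at the doubly stochastic matrix $P$.

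Finally I would invoke Birkhoff's theorem, which identifies the polytope of doubly stochastic matrices with the convex hull of the permutation matrices. A linear functional over a polytope attains its minimum at a vertex, so
\[
\min_\sigma \sum_{i=1}^n c_{i\sigma(i)} = \min_{\tilde{P}} \sum_{i,j} \tilde{P}_{ij} c_{ij} \le \sum_{i,j} P_{ij} c_{ij} = \norm{E}_F^2,
\]
where the left-hand minimum ranges over all permutations $\sigma$ and the middle one over all doubly stochastic $\tilde{P}$. Hence there exists a permutation $\sigma$ with $\sum_{i=1}^n (\hat{\lambda}_{\sigma(i)} - \lambda_i)^2 \le \norm{E}_F^2$, which is the claim.

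I expect the main conceptual step to be the passage from the orthogonal matrix $W$ to the doubly stochastic matrix $P = (W_{ij}^2)$, combined with the application of Birkhoff's theorem; the norm manipulations are routine once orthogonal invariance is used. One subtlety worth flagging is that the symmetry hypothesis is precisely what guarantees real orthogonal diagonalizations with real eigenvalues, so that the quantities $(\hat{\lambda}_j - \lambda_i)^2$ in the bound are genuine nonnegative reals and the combinatorial optimization over permutations is well posed.
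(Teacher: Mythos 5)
Your proposal is correct, and it coincides with the proof in the paper's cited source: the paper itself offers no proof of this proposition, invoking it directly from Horn and Johnson, where the argument is exactly yours --- orthogonal diagonalization of the symmetric matrices, the identity $\norm{E}_F=\norm{W\hat{\Lambda}-\Lambda W}_F$ with $W=U^\top V$, passage to the doubly stochastic matrix $P_{ij}=W_{ij}^2$, and minimization of the resulting linear functional over the Birkhoff polytope at a permutation-matrix vertex. Nothing further is needed.
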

\begin{lem}\label{lem3}
	Let $\bar{A}$ be an $n\times n$ symmetrically structured matrix, and let $\mathcal{D}(\bar{A})=\{\mathcal{X},\mathcal{E_{X,X}}\}$ be the digraph associated with $\bar{A}$. Assume \emph{t--rank}$(\bar{A})=k$, and denote $\{ [\bar{A}]_{i_\ell j_\ell} \}_{\ell = 1}^k$ as the $k$ entries that lie on distinct rows and distinct columns. We define $\mathcal{S}=\{x_{i_1},\dots,x_{i_k}\}\subseteq \mathcal{X}$. Then, $\mathcal{D}_{\mathcal{S}}$ can be covered by disjoint cycles.
\end{lem}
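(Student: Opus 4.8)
The plan is to translate the statement into a claim about a principal submatrix and then exploit the symmetry of $\bar{A}$ together with the maximality of the chosen transversal. Write $R=\{i_1,\dots,i_k\}$ for the set of row indices and $C=\{j_1,\dots,j_k\}$ for the set of column indices of the given transversal, so that $\mathcal{S}$ is indexed by $R$. The first observation is that $\mathcal{D}_{\mathcal{S}}$ can be covered by disjoint cycles if and only if the principal submatrix $[\bar{A}]_{R,R}$ admits a permutation supported entirely on $\star$-entries; that is, $\textrm{t--rank}([\bar{A}]_{R,R})=k$. Indeed, a family of vertex-disjoint cycles covering $\mathcal{S}$ is exactly a permutation $\sigma$ of $R$ with $[\bar{A}]_{p,\sigma(p)}=\star$ for every $p\in R$ (length-one cycles corresponding to $\star$ diagonal entries), and since $\bar{A}$ is symmetrically structured the orientation of the edges is immaterial. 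Hence it suffices to construct such a $\sigma$.

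To build $\sigma$, I would analyze the bijection $\pi\colon R\to C$ defined by $\pi(i_\ell)=j_\ell$, regarded as a partial injection on $R\cup C$. The standard decomposition of a partial permutation splits $R\cup C$ into cycles lying inside $R\cap C$ and alternating paths running from a vertex of $R\setminus C$ to a vertex of $C\setminus R$. On each cycle $a_0\to a_1\to\cdots\to a_{m-1}\to a_0$ every vertex lies in $R$, so $\pi$ already restricts there to a $\star$-supported permutation of those vertices. The remaining work is to treat each path $a_0\to a_1\to\cdots\to a_t$, whose $R$-vertices are exactly $a_0,\dots,a_{t-1}$.

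The key step is to show that every such path has an even number $t$ of edges. Here I would invoke the maximality of the transversal: since $k=\textrm{t--rank}(\bar{A})$, the corresponding matching $M=\{r_{i_\ell}-c_{j_\ell}\}$ is a maximum matching in the bipartite graph associated with $\bar{A}$ as in Proposition~\ref{termrank}. If $t$ were odd, then symmetry of $\bar{A}$ supplies, for each matching edge $r_{a_s}-c_{a_{s+1}}$, the reverse $\star$-entry $[\bar{A}]_{a_{s+1},a_s}=\star$ giving a non-matching edge $c_{a_s}-r_{a_{s+1}}$. Concatenating these produces the $M$-alternating path $c_{a_0}-r_{a_1}=c_{a_2}-r_{a_3}=\cdots$ that begins at the exposed column vertex $c_{a_0}$ (exposed because $a_0\notin C$) and, when $t$ is odd, terminates at the exposed row vertex $r_{a_t}$ (exposed because $a_t\notin R$); the vertices along it are distinct because the path in $\pi$ is simple. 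This is an augmenting path, contradicting the maximality of $M$. Therefore $t$ is even for every path, and this parity argument—which fails if one drops either the symmetry of $\bar{A}$ or the maximality of the transversal, as even a single off-diagonal $\star$-entry illustrates—is where I expect the main difficulty to lie.

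Once every path has even length, its $R$-vertices $a_0,\dots,a_{t-1}$ are even in number and can be paired into the transpositions $(a_0\,a_1),(a_2\,a_3),\dots,(a_{t-2}\,a_{t-1})$; each transposition $(a_{2m}\,a_{2m+1})$ is $\star$-supported since $[\bar{A}]_{a_{2m},a_{2m+1}}=\star$ by construction (as $\pi(a_{2m})=a_{2m+1}$) and $[\bar{A}]_{a_{2m+1},a_{2m}}=\star$ by symmetry, and all these entries lie within $R\times R$. Combining these transpositions across all paths with the cyclic permutations coming from the cycles inside $R\cap C$ yields a permutation $\sigma$ of $R$ with full $\star$-support, i.e. $\textrm{t--rank}([\bar{A}]_{R,R})=k$, which is precisely a disjoint cycle cover of $\mathcal{D}_{\mathcal{S}}$.
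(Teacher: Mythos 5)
Your proof is correct, and it takes a genuinely different --- and considerably more complete --- route than the paper's. The paper proves Lemma~\ref{lem3} in three lines by contradiction: it asserts that if $\mathcal{D}_{\mathcal{S}}$ had no disjoint cycle cover, then some vertex of $\mathcal{S}$ could only be covered by cycles intersecting other cycles, and that this already precludes the existence of $k$ entries on distinct rows and distinct columns of $\bar{A}$; the middle implication (and even the implicit claim that every vertex of $\mathcal{S}$ lies on some cycle of the \emph{induced} subgraph --- the $2$-cycle through $x_{i_\ell}$ guaranteed by symmetry may leave $\mathcal{S}$) is left unjustified, so the published argument is essentially a restatement of the contrapositive. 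You supply the missing mechanism: after correctly recasting the claim as full term rank of the principal submatrix of $\bar{A}$ on $R=\{i_1,\dots,i_k\}$, you decompose the transversal bijection $\pi(i_\ell)=j_\ell$ into cycles inside $R\cap C$ and paths from $R\setminus C$ to $C\setminus R$, and you eliminate odd-length paths by turning the symmetric partners $[\bar{A}]_{a_{s+1}a_s}=\star$ of the matching entries into an $M$-augmenting path from the exposed column vertex $c_{a_0}$ to the exposed row vertex $r_{a_t}$, contradicting $|M|=\textrm{t--rank}(\bar{A})$. The details check out: the reversed edges cannot themselves lie in $M$ because $\pi$ is a function along a simple path, the endpoints are exposed precisely because $a_0\notin C$ and $a_t\notin R$, distinctness of $a_0,\dots,a_t$ keeps the augmenting path simple, and pairing the even number of $R$-vertices into $\star$-supported transpositions (together with the $\pi$-cycles in $R\cap C$, fixed points giving self-loops) yields the required permutation. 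Comparing the two: the paper's sketch is short and leans on the correct intuition that symmetry makes cycles plentiful, but it never isolates where symmetry and maximality of the transversal enter, whereas your parity argument pinpoints both --- your observation that dropping either hypothesis breaks the claim is exactly the paper's own example $\bar{M}=\left[\begin{smallmatrix}0&\star\\0&0\end{smallmatrix}\right]$ from the remark following Lemma~\ref{lem4} --- and your construction is explicit, producing a cover by self-loops, $2$-cycles and $\pi$-cycles, in the same spirit in which the cover is subsequently manipulated (splitting cycles into length-$2$ pieces) in the proof of Lemma~\ref{lem4}.
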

	\begin{proof}[Proof of Lemma~\ref{lem3}]
	We approach the proof by contradiction. Suppose $\mathcal{D}_{\mathcal{S}}$ cannot be covered by disjoint cycles, then at least one vertex $x_{i}\in\mathcal{S}$ can only be covered by cycles intersecting with other cycles in $\mathcal{D}_{\mathcal{S}}$, 
	which implies that there does not exist $k$ edges in which no two edges share the same 'tail' or 'head' vertex in $\mathcal{D}(\bar{A})$, i.e., there does not exist $k$ entries that lie on distinct rows and distinct columns of $\bar{A}$, which, by Proposition~\ref{termrank}, contradicts $\textrm{t--rank}(\bar{A})=k$.
\end{proof}

\begin{proof}[Proof of Lemma~\ref{lem4}] 
	We expand the characteristic polynomial of a matrix~$\tilde{A}$ as
	\begin{equation}\label{l2e1}\small
	\det(sI-\tilde{A})=s^n+a_{n-1}s^{n-1}\dots+a_{n-k}s^{n-k}+\dots + a_0.
	\end{equation}
	Besides, we have
	\begin{equation}\label{l2e2}\small
	a_q=(-1)^{n-q}\sum_{1\le k_1<\dots<k_{n-q} \le 	n}\det([\tilde{A}]_{k_1,\dots,k_{n-q}}^{k_1,\dots,k_{n-q}}),
	\end{equation}
	where $q=0,1,\dots,n-1$. Since $\textrm{t--rank}(\bar{A})=k$, there exists a numerical realization $\tilde{A}$ and a set of indexes, $\{i_1,\dots,i_k\}\subseteq[n]$, such that $\det([\tilde{A}]_{i_1,\dots,i_k}^{i_1,\dots,i_k})\ne0.$ Furthermore, $V_{0}:=\{\mathbf{p}_{\tilde{A}}\in\mathbb{R}^{n_{\bar{A}}}:a_{n-k}=0\}$ is a proper variety. Since the maximum order of principle minor is at most the term rank of a matrix, we have $a_{n-k-1}=\dots =a_0=0$. Thus, to characterize nonzero eigenvalues, we define the polynomial $\varphi_{\tilde{A}}(s)$ as
	\begin{equation}\label{phi}\small
		\varphi_{\tilde{A}}(s)=s^k+a_{n-1}s^{k-1}+\dots+a_{n-k}.
	\end{equation}
	
	In the rest of the proof, we show that there exists a numerical realization $\mathbf{p}_{\tilde{A}}\in V_0^c$ such that $\tilde{A}$ has $k$ non-zero simple eigenvalues. Since $\textrm{t--rank}(\bar{A})=k$, we define the set $\mathcal{S}$ as in Lemma~\ref{lem3}. By Lemma~\ref{lem3}, there exist disjoint cycles $\mathcal{C}_1,\dots,\mathcal{C}_l$ covering $\mathcal{D}_{\mathcal{S}}$. Let us denote by $\mathcal{C}_i$ the $i$-th cycle in $\{\mathcal{C}_1,\dots,\mathcal{C}_l\}$. Moreover, without loss of generality, we let the length of cycle $\mathcal{C}_i$ be either $\left|\mathcal{C}_i\right|=2q$, or $\left|\mathcal{C}_i\right|=2q+1$, for some $q\in\mathbb{N}$. 
	{Note that by definition, there is a one-to-one correspondence between the edge in $\mathcal{D}(\bar{A})$ and the $\star$-entry in $\bar{A}$. From this observation, we denote by $\bar{A}_i\in\{0,\star\}^{|\mathcal{C}_i|\times |\mathcal{C}_i|}$ the square submatrix formed by collecting rows and columns corresponding to the indexes of vertices in $\mathcal{V}_{\mathcal{C}_i}$ of the cycle $\mathcal{C}_i$. }
	We let all the $\star$-entries of $\bar{A}$ be zero, except for $\star$-entries corresponding to edges in $\{\mathcal{E}_{\mathcal{C}_i}\}_{i=1}^l$. Hence, there exists a permutation matrix $P$ and numerical realization $\tilde{A}$, such that $P\tilde{A}P^{-1}$ is a block diagonal matrix,
	\begin{equation}\label{decomposition}\small
	P\tilde{A}P^{-1}=\begin{bmatrix}
	\tilde{A}_{1}&\mathbf{0}&\cdots&\mathbf{0}&\mathbf{0}\\
	\mathbf{0}&\tilde{A}_2&\cdots&\mathbf{0}&\mathbf{0}\\
	\vdots & \vdots & \ddots&\vdots&\vdots\\
	\mathbf{0} & \mathbf{0} &\cdots&\tilde{A}_l&\mathbf{0}\\
	\mathbf{0} & \mathbf{0} & \cdots &\mathbf{0}&\mathbf{0}
	\end{bmatrix}.
	\end{equation}
	
	If $\left|\mathcal{C}_i\right|=2q$, without loss of generality, we could assume $\mathcal{C}_i=(x_{i_1},x_{j_1},x_{i_2},x_{j_2},\dots,x_{i_q},x_{j_q},x_{i_1})$. Since $\mathcal{D}_{\mathcal{V}_{\mathcal{C}_i}}$ is a subgraph of the digraph $\mathcal{D}(\bar{A})$ associated with the symmetrically structured matrix $\bar{A}$, there exist $q$ disjoint cycles of length-2 covering $\mathcal{D}_{\mathcal{V}_{\mathcal{C}_i}}$, i.e., cycles $(x_{i_1},x_{j_1},x_{i_1}),(x_{i_2},{x_{j_2}},{x_{i_2}}),\dots,({x_{i_q}},{x_{j_{q}}},{x_{i_q}})$. 
	We assign distinct nonzero weights to $\star$-entries of $\bar{A}_i$ that correspond to edges in the $q$ cycles of length-2, and assign zero weights to other $\star$-entries in $\bar{A}_i$. As a result, we have
	
	\begin{equation*}\tiny
	\tilde{A}_i=\left[\begin{array}{ccccccccccccccc}
	\tikzmark{left1} 0&a_{i_1j_1}&\cdots&0&0\\
	a_{i_1j_1}&0\tikzmark{right1}&\cdots&0&0\\
	\vdots&\vdots&\ddots&\vdots&\vdots\\
	0&0&\cdots&\tikzmark{left2}0&a_{i_qj_q}\\
	0&0&\cdots&a_{i_qj_q}&0\tikzmark{right2}
	\end{array}\right],
	\tikzdrawbox{1}{black}
	\tikzdrawbox{2}{black}
	\end{equation*}

	\noindent where $a_{i_1j_1},\dots,a_{i_qj_q}$ are $q$ nonzero distinct weights. Thus, $\tilde{A}_i$ has $2q$ simple nonzero eigenvalues.
	
	If $\left|\mathcal{C}_i\right|=1$, then the eigenvalue of $\tilde{A}_i\in\mathbb{R}^{1\times 1}$ can be placed to any value. If $\left|\mathcal{C}_i\right|=2q+1$ and $q>0$, then there are $2q$ vertices in $\mathcal{C}_i$ that can be covered by $q$ cycles of length-$2$, and one vertex that cannot be covered by any length-$2$ cycle in a vertex-disjoint way in $\mathcal{D}_{\mathcal{V}_{\mathcal{C}_i}}$. Assign distinct nonzero weights to $\star$-entries corresponding to the $q$ cycles of length-2, and zero to other $\star$-entries in $\bar{A}_i$. As a result, the constructed numerical realization, $\tilde{A}_i$, has $2q$ nonzero simple eigenvalues and one zero eigenvalue. Denote by $\lambda_j(\tilde{A}_i)$ the $j$th eigenvalue of $\tilde{A}_i$, $j\in\{1,\dots,\left|\mathcal{C}_i\right|\}$.
	
	By Proposition~\ref{hoff}, given a sufficiently small $\epsilon>0,$ $\exists\delta>0$ and permutation $\sigma(\cdot)$ of integers $\{1,\dots,|\mathcal{C}_i|\}$, such that for two numerical realizations of $\bar{A}_i$: $\tilde{A}_i$ and $\tilde{A}_{ip}$, if $||\tilde{A}_{ip}-\tilde{A}_i||_F<\delta,$ then $\max\{|\lambda_{\sigma(j)}(\tilde{A}_{ip})-\lambda_{j}(\tilde{A}_i)|\}<\epsilon$. Perturb $\star$-entries of $\tilde{A}_i$ corresponding to edges in $\mathcal{E}_{{\mathcal{C}}_i}$, 
	such that $\tilde{A}_{ip}$, which is derived by this perturbation of $\tilde{A}_i$, satisfies $||\tilde{A}_{ip}-\tilde{A}_i||_F<\delta$. Moreover, since $\textrm{t--rank}(\bar{A}_i)=2q+1$, by Lemma~\ref{symmetric}, $\textrm{g--rank}(\bar{A}_i)=2q+1$. 
	{The above analysis shows that we can perturb $\tilde{A}_i$, such that $\textrm{rank}(\tilde{A}_{ip})=2q+1$, and}
	\begin{equation*}
	\small
	\begin{aligned}
	&\min_{
		j\ne r,j,r\in\{1,\dots,|\mathcal{C}_i|\}	}	|\lambda_{j}(\tilde{A}_{ip})-\lambda_{r}(\tilde{A}_{ip})|>\\
	&\indent\min_{j\ne r,j,r\in\{1,\dots,|\mathcal{C}_i|\}}|\lambda_j(\tilde{A}_i)-\lambda_r(\tilde{A}_i)|-2\epsilon.
	\end{aligned}
	\end{equation*}
	It implies that there exists $\tilde{A}_{ip}$ which has $2q+1$ nonzero simple eigenvalues. Notice that $\tilde{A}_{ip}$ is also a numerical realization of $\bar{A}_i$. Hence, for either $\left|\mathcal{C}_i\right|=2q$, or $\left|\mathcal{C}_i\right|=2q+1$, there exists a numerical realization $\tilde{A}_i$ such that $\tilde{A}_i$ has $\left|\mathcal{C}_i\right|$ nonzero simple eigenvalues. Also, there exists $\tilde{A}$ 
	that has $\sum_{i=1}^l\left|\mathcal{C}_i\right|=k$ nonzero simple eigenvalues.
	
	Denote by $\varphi'_{\tilde{A}}$ the derivative of $\varphi_{\tilde{A}}$ with respect to $\lambda$. If~$\mathbf{p}_{\tilde{A}}\in V^c_0$, and $\tilde{A}$ has repeated nonzero modes, then $\varphi_{\tilde{A}}$ and $\varphi'_{\tilde{A}}$ have a common nontrivial zero (i.e., by Proposition~\ref{prop}, $R(\varphi_{\tilde{A}},\varphi'_{\tilde{A}})=0$). Define $V_1=\{\mathbf{p}_{\tilde{A}}\in\mathbb{R}^{n_{\bar{A}}}\colon a_{n-k}=0\ \textrm{or}\ R(\varphi_{\tilde{A}},\varphi'_{\tilde{A}})=0\}$, where $a_{n-k}=0$ and $R(\varphi_{\tilde{A}},\varphi'_{\tilde{A}})=0$ are both polynomials of $\star$-entries of $\bar{A}$. Since we have shown that there exists $\tilde{A}$ which has $k$ nonzero simple eigenvalues, i.e., $\exists \mathbf{p}_{\tilde{A}}\in\mathbb{R}^{n_{\bar{A}}}$ such that $a_{n-k}\ne0$ and $R(\varphi_{\tilde{A}},\varphi'_{\tilde{A}})\ne 0$, we conclude that $V_1$ is proper.
\end{proof}

\begin{remark}
	To characterize the generic rank of $[\bar{A},\bar{B}]$, which is crucial in the derivation of Lemma~\ref{theo2}, we should consider the proper variety in parameter space $\mathbb{R}^{n_{\bar{A}}+n_{\bar{B}}}$. Since each $\star$-entry of $\bar{A}$ is independent of those in $\bar{B}$, $V_1$ is also a proper variety in $\mathbb{R}^{n_{\bar{A}}+n_{\bar{B}}}$. Let us redefine $V_1$ as
	\begin{equation}\label{V_1}\small
	V_1=\{[\mathbf{p}_{\tilde{A}},\mathbf{p}_{\tilde{B}}]\in\mathbb{R}^{n_{\bar{A}}+n_{\bar{B}}}:a_{n-k}=0\ \emph{or}\ R(\varphi_{\tilde{A}},\varphi'_{\tilde{A}})=0 \}.
	\end{equation}
\end{remark}

\subsection{Proof of Lemma~\ref{theo2}}
We first introduce Lemma~\ref{lem5} in support of proving Lemma~\ref{theo2}.
\begin{lem}\label{lem5}
	{Consider an irreducible structural pair $(\bar{A},\bar{B})$, where $\bar{A}\in\{0,\star\}^{n\times n}$ is a symmetrically structured matrix with $\emph{t--rank}(\bar{A})=k$. Let $V_1\subset\mathbb{R}^{n_{\bar{A}}+n_{\bar{B}}}$ be defined as in \eqref{V_1}. There exists a proper variety $V_2\subset \mathbb{R}^{n_{\bar{A}}+n_{\bar{B}}}$ such that if $[\mathbf{p}_{\tilde{A}},\mathbf{p}_{\tilde{B}}]\in V^c_1$, then there exists a non-zero uncontrollable mode of $\tilde{A}$ if and only if $[\mathbf{p}_{\tilde{A}},\mathbf{p}_{\tilde{B}}]\in V_2$.}
\end{lem}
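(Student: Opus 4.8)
The plan is to work throughout on the complement $V_1^c$, where Lemma~\ref{lem4} guarantees that $\tilde{A}$ has exactly $k$ nonzero simple eigenvalues $\lambda_1,\dots,\lambda_k$, and where $\det(sI-\tilde A)=s^{\,n-k}\varphi_{\tilde A}(s)$ with the roots of the monic polynomial $\varphi_{\tilde A}$ being precisely these nonzero eigenvalues. Since $\bar A$ is symmetrically structured, every realization $\tilde A$ is symmetric, hence orthogonally diagonalizable with an orthonormal eigenbasis $e_1,\dots,e_n$, and its left and right eigenvectors coincide. Combining this with the PBH test (Proposition~\ref{pbh}), $\tilde A$ has a nonzero uncontrollable mode if and only if $e_j^\top\tilde B=0$ for some $j$ with $\lambda_j\neq 0$, i.e. iff some root $\lambda_j$ of $\varphi_{\tilde A}$ is uncontrollable.

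The first technical step is to convert this spectral condition into a polynomial one in the parameters. I would introduce the polynomial matrix $W(s):=\mathrm{adj}(sI-\tilde A)\,\tilde B$, whose entries are polynomials in $s$ with coefficients polynomial in $(\mathbf p_{\tilde A},\mathbf p_{\tilde B})$. Because each $\lambda_j$ is a simple eigenvalue and all eigenvalues are distinct on $V_1^c$, one has $\mathrm{adj}(\lambda_j I-\tilde A)=c_j\,e_je_j^\top$ with $c_j=\prod_{l\neq j}(\lambda_j-\lambda_l)\neq 0$, so that $W(\lambda_j)=c_j\,e_j(e_j^\top\tilde B)$ vanishes exactly when the mode $(\lambda_j,e_j)$ is uncontrollable. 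Forming the scalar polynomial $\phi(s):=\sum_{p,q}[W(s)]_{pq}^2$, which is real and equals $c_j^2\|e_j\|^2\|e_j^\top\tilde B\|^2$ at the real root $\lambda_j$, the mode at $\lambda_j$ is uncontrollable iff $\phi(\lambda_j)=0$. Hence a nonzero uncontrollable mode exists iff $\varphi_{\tilde A}$ and $\phi$ share a root, which by Proposition~\ref{prop} corresponds to $R(\varphi_{\tilde A},\phi)=0$; because $\varphi_{\tilde A}$ is monic, $R(\varphi_{\tilde A},\phi)=\prod_{j=1}^k\phi(\lambda_j)$ exactly (valid even if $\phi$ loses degree), which is a symmetric function of the roots and therefore a genuine polynomial in $(\mathbf p_{\tilde A},\mathbf p_{\tilde B})$. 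I would then set $V_2:=\{[\mathbf p_{\tilde A},\mathbf p_{\tilde B}]\colon R(\varphi_{\tilde A},\phi)=0\}$, a variety satisfying the claimed equivalence on $V_1^c$.

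It remains to show that $V_2$ is proper, i.e. that $R(\varphi_{\tilde A},\phi)\not\equiv 0$, and this is where irreducibility enters and is the main obstacle. It suffices to exhibit one realization in $V_1^c$ with all nonzero modes controllable. I would take a generic realization $\tilde A$ of $\bar A$ (so that, off a proper subvariety containing $V_1$, it has $k$ distinct nonzero eigenvalues) together with a generic $\tilde B$; for this to succeed it is enough that no nonzero eigenvector $e_j$ vanishes identically on the set $R$ of input-actuated states, since then $e_j^\top\tilde B\neq 0$ for generic $\tilde B$. Here one uses that, for symmetric $\bar A$, irreducibility of $(\bar A,\bar B)$ is equivalent to every state being input-reachable, so the actuated set meets every connected component of the state graph.

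The crux is thus the claim that, generically, no nonzero eigenvector vanishes on $R$. The approach is to suppose $e_j$ (with $\lambda_j\neq 0$) vanishes on $R$, set $S=\mathrm{supp}(e_j)$ so that $S\cap R=\emptyset$, and restrict the eigen-equation: $e_j|_S$ is then a nonzero eigenvector of the principal submatrix $\tilde A_{SS}$ for $\lambda_j$ that is simultaneously annihilated by the coupling block $\tilde A_{S^cS}$. Input-reachability forces $S$ to be coupled to the remainder of the graph, making this an overdetermined system in the generic weights, which one shows to be inconsistent off a proper subvariety (equivalently, the special algebraic cancellations needed to sustain such a hidden nonzero mode fail generically). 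Establishing this genericity statement carefully, ruling out for every candidate support $S$ a nonzero-eigenvalue eigenvector killed by the boundary coupling, is the technical heart of the proof; granting it, a generic $(\tilde A,\tilde B)$ witnesses $R(\varphi_{\tilde A},\phi)\neq 0$, so $V_2$ is proper and the lemma follows.
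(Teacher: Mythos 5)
Your first half coincides with the paper's own proof: your $\phi(s)=\sum_{p,q}[W(s)]_{pq}^2$ is precisely the paper's $\psi_{\tilde{A},\tilde{B}}(s)$ in \eqref{v24}, since $\textrm{tr}(WW^\top)=\sum_{p,q}[W]_{pq}^2$, and your resultant-defined set is exactly the $V_2$ of \eqref{V_2}. Your treatment of the equivalence on $V_1^c$ is sound, and in places more explicit than the paper's: the rank-one formula $\textrm{adj}(\lambda_j I-\tilde{A})=c_j e_j e_j^\top$ with $c_j=\prod_{l\neq j}(\lambda_j-\lambda_l)$ substantiates the fact the paper merely imports from \cite{hosoe1979irreducibility}, and your use of symmetry to ensure $\lambda_j\in\mathbb{R}$, so that $\phi(\lambda_j)=0$ forces $W(\lambda_j)=\mathbf{0}$ entrywise, is the right observation. (One small slip: on $V_1^c$ only the $k$ nonzero eigenvalues are guaranteed simple --- zero may be repeated --- but $c_j\neq 0$ still follows from the simplicity and nonzeroness of $\lambda_j$ itself, so nothing breaks.)

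The genuine gap is the properness of $V_2$, which is the substance of the lemma, and your argument there is circular. You reduce properness to the claim that, for generic $(\tilde{A},\tilde{B})$, no eigenvector of $\tilde{A}$ associated with a nonzero eigenvalue vanishes on the actuated set; but that claim is equivalent to the conclusion being proved (generic controllability of all nonzero modes, i.e., $R(\varphi_{\tilde{A}},\psi_{\tilde{A},\tilde{B}})\not\equiv 0$), and you support it only with an ``overdetermined system, hence generically inconsistent'' heuristic before explicitly granting it. Counting equations against unknowns does not establish that a variety is proper: one must either exhibit a witness realization --- respecting the constraint $[\tilde{A}]_{ij}=[\tilde{A}]_{ji}$ --- at which the resultant is nonzero, or run a contradiction argument; and symmetric parameter dependencies are exactly the setting in which such counting heuristics can fail (compare the paper's remark that the conclusion of Lemma~\ref{lem4} is false for the asymmetric pattern $\bar{M}=\left[\begin{smallmatrix}0&\star\\0&0\end{smallmatrix}\right]$ even though $\textrm{t--rank}(\bar{M})=1$). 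A further unaddressed technicality is that your support set $S=\textrm{supp}(e_j)$ varies with the realization, so you would need, for each of the finitely many candidate supports disjoint from the actuated rows, a separate proof that the corresponding ``bad'' set lies in a proper variety; your sketch does not carry this out for even one $S$. The paper instead handles this step by contradiction, adapting the proof of Theorem~2 of \cite{hosoe1979irreducibility} to the symmetric setting --- that adaptation (in particular, making Hosoe's perturbations respect symmetry) is exactly the technical content your proposal leaves missing, so as it stands the proof is incomplete at its decisive point.
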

\begin{proof}[Sketch of Proof of Lemma~\ref{lem5}]
	We will first prove that $V_2$ exists. Suppose $[\mathbf{p}_{\tilde{A}},\mathbf{p}_{\tilde{B}}]\in V^c_1$, by a similar reasoning as in Lemma~\ref{lem4}, all the $k$ nonzero eigenvalues of $\tilde{A}$ are simple. Let $\lambda$ be a nonzero eigenvalue of $\tilde{A}$, and $\varphi_{\tilde{A}}(s)$ be defined as in \eqref{phi}, then we have,
	\begin{equation}\label{lambda}\small
	\varphi_{\tilde{A}}(\lambda)=\lambda^{k}+a_{n-1}\lambda^{k-1}+\dots+a_{n-k}=0.
	\end{equation}
	Let us further assume that $(\lambda,v)$ is an uncontrollable mode of $\tilde{A}$; in other words,
	\begin{equation}\small
	v^\top\tilde{A}=\lambda v^\top,\ \ 
	v^\top\tilde{B}=\mathbf{0}\label{v21}.
	\end{equation}
	Since all the nonzero eigenvalues $\lambda$ are simple, recall the fact in \cite{hosoe1979irreducibility} that the left eigenvector $v^\top$ equals (apart from a constant scalar) any of the nonzero row of the adjugate matrix $\textrm{adj}(\lambda I-\tilde{A})$. Hence,
	\begin{equation}\small
	\textrm{adj}(\lambda I-\tilde{A})\tilde{B}=\mathbf{0}_{n\times m}\label{v22}.
	\end{equation}
	Equations \eqref{lambda} and \eqref{v22} imply that the two polynomials \eqref{v23} and \eqref{v24} have a common zero $\lambda$, namely,
	\begin{small}
		\begin{align}
		\varphi_{\tilde{A}}(s)&=s^k+a_{n-1}s^{k-1}+\dots+a_{n-k}=0\label{v23},\\
		\psi_{\tilde{A},\tilde{B}}(s)&=\textrm{tr}([\textrm{adj}(sI-\tilde{A})\tilde{B}][\textrm{adj}(sI-\tilde{A})\tilde{B}]^\top)=0\label{v24}.
		\end{align}
	\end{small}

\noindent{The} variety $V_2$ is defined as follows,
	\begin{equation}\label{V_2}\small
	V_2=\{[\mathbf{p}_{\tilde{A}},\mathbf{p}_{\tilde{B}}]\in\mathbb{R}^{n_{\bar{A}}+n_{\bar{B}}}:R(\varphi_{\tilde{A}},\psi_{\tilde{A},\tilde{B}})=0 \},
	\end{equation}  
	where $R(\varphi_{\tilde{A}},\psi_{\tilde{A},\tilde{B}})=0$ is a polynomial of the $\star$-entries in $\bar{A}$ and $\bar{B}$. The properness of $V_{2}$ can be shown by contradiction by adapting the proof in \cite[Theorem 2]{hosoe1979irreducibility}.
Conversely, suppose $[\mathbf{p}_{\tilde{A}},\mathbf{p}_{\tilde{B}}]\in V_2\cap V^c_1$, by the definition of $V_1$ and $V_2$, $\varphi_{\tilde{A}}$ and $\psi_{\tilde{A},\tilde{B}}$ have a common zero $\lambda\ne0$. Since $\lambda$ is a zero of $\varphi_{\tilde{A}}$, $\lambda$ is also an eigenvalue of $\tilde{A}$, which is an uncontrollable eigenvalue.
\end{proof}
\begin{proof}[Proof of Lemma~\ref{theo2}]
Define $V=V_1\cup V_2$, where $V_1$ and $V_2$ are defined as in (\ref{V_1}) and (\ref{V_2}), respectively. We can prove $V_1$ is proper by a similar reasoning as the one in Lemma~\ref{lem4}. By Lemma~\ref{lem5}, $V_2$ is proper. Hence, $V=V_1\cup V_2$ is proper. If $[\mathbf{p}_{\tilde{A}},\mathbf{p}_{\tilde{B}}]\in V^c$, $\tilde{A}$ has $k$ nonzero simple controllable modes.
\end{proof}
\subsection{Proof of Theorem~\ref{theo3}}
We first introduce Lemma~\ref{nec}, which lays the foundation for the proof of Theorem~\ref{theo3}.
\begin{lem}\label{nec}
	{Consider a structural pair $(\bar{A},\bar{B})$, and a target set $\mathcal{T}$ with the corresponding state vertex set $\mathcal{X_T}$ in $\mathcal{D}(\bar{A},\bar{B})$. We define $C_{\mathcal{T}}$ according to (\ref{CT}). Given a numerical realization $(\tilde{A},\tilde{B})$, we define controllability matrix $Q(\tilde{A},\tilde{B})$ as in Definition~\ref{structuralcontrollability}. Then, for any numerical realization $(\tilde{A},\tilde{B})$
		, we have that $\emph{rank}(C_{\mathcal{T}}Q(\tilde{A},\tilde{B}))\le \left|\mathcal{N(X_T)}\right|$.}
\end{lem}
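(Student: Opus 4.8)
The plan is to bound $\textrm{rank}(C_{\mathcal{T}}Q(\tilde A,\tilde B))$ by studying the column space of $C_{\mathcal{T}}Q(\tilde A,\tilde B)=[\,C_{\mathcal{T}}\tilde B,\ C_{\mathcal{T}}\tilde A\tilde B,\ \dots,\ C_{\mathcal{T}}\tilde A^{n-1}\tilde B\,]$ block by block, exploiting that the rows of $C_{\mathcal{T}}\tilde A$ and of $C_{\mathcal{T}}\tilde B$ are supported only on the in-neighbours of $\mathcal{X_T}$. I would first split the in-neighbour set computed in $\mathcal{D}(\bar A,\bar B)$ as $\mathcal{N(X_T)}=\mathcal{N_X}\cup\mathcal{N_U}$, where $\mathcal{N_X}$ and $\mathcal{N_U}$ collect its state and input vertices respectively; since these are disjoint, $\left|\mathcal{N(X_T)}\right|=\left|\mathcal{N_X}\right|+\left|\mathcal{N_U}\right|$.

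The key structural observation is that for $i_\ell\in\mathcal{T}$ the $i_\ell$-th row of $\tilde A$ can be nonzero only in columns $j$ with $[\bar A]_{i_\ell j}=\star$, i.e.\ those $j$ for which $(x_j,x_{i_\ell})$ is an edge; thus every nonzero column of the $k\times n$ matrix $C_{\mathcal{T}}\tilde A$ is indexed by a vertex of $\mathcal{N_X}$. This lets me factor $C_{\mathcal{T}}\tilde A=G\,S$, where $S$ is the $\left|\mathcal{N_X}\right|\times n$ selection matrix picking out the rows indexed by $\mathcal{N_X}$ and $G:=C_{\mathcal{T}}\tilde A\,S^\top$ has only $\left|\mathcal{N_X}\right|$ columns (the identity $C_{\mathcal{T}}\tilde A\,S^\top S=C_{\mathcal{T}}\tilde A$ holds exactly because the complementary columns vanish identically). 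Consequently, for every $p\ge 1$ we have $C_{\mathcal{T}}\tilde A^p\tilde B=G\,(S\tilde A^{p-1}\tilde B)$, so the column space of each block with $p\ge1$ lies inside the column space of $G$, whose dimension is at most $\left|\mathcal{N_X}\right|$.

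It remains to treat the $p=0$ block $C_{\mathcal{T}}\tilde B$: by the same reasoning, its $i_\ell$-th row is supported only on columns $j$ with $[\bar B]_{i_\ell j}=\star$, i.e.\ on input in-neighbours, so its nonzero columns are indexed by $\mathcal{N_U}$ and $\textrm{rank}(C_{\mathcal{T}}\tilde B)\le\left|\mathcal{N_U}\right|$. The column space of $C_{\mathcal{T}}Q(\tilde A,\tilde B)$ is the sum of the column spaces of all its blocks, which is contained in $\textrm{colspace}(C_{\mathcal{T}}\tilde B)+\textrm{colspace}(G)$; hence $\textrm{rank}(C_{\mathcal{T}}Q(\tilde A,\tilde B))\le\left|\mathcal{N_U}\right|+\left|\mathcal{N_X}\right|=\left|\mathcal{N(X_T)}\right|$, which is the claim.

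I expect no serious obstacle: the content is the one-step support observation that the target rows ``funnel'' through the in-neighbours, and the only care needed is the bookkeeping of the selection matrices and verifying that the factorization $C_{\mathcal{T}}\tilde A=GS$ is an exact identity rather than an approximation. It is worth noting that this argument never uses symmetry of $\bar A$, which is consistent with the paper's later remark that Condition~2) remains necessary even when $\bar A$ is not symmetrically structured.
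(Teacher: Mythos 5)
Your proof is correct and takes essentially the same route as the paper's: both rest on the observation that the nonzero columns of $C_{\mathcal{T}}\tilde{B}$ and of $C_{\mathcal{T}}\tilde{A}$ are indexed by the input and state in-neighbours of $\mathcal{X_T}$, respectively, and that every block $C_{\mathcal{T}}\tilde{A}^p\tilde{B}$ with $p\ge 1$ funnels through $C_{\mathcal{T}}\tilde{A}$. The only cosmetic difference is that the paper reaches the bound via Cayley--Hamilton and rank subadditivity, $\textrm{rank}(C_{\mathcal{T}}Q)=\textrm{rank}(C_{\mathcal{T}}[\tilde{B},\tilde{A}Q])\le \textrm{rank}(C_{\mathcal{T}}\tilde{B})+\textrm{rank}(C_{\mathcal{T}}\tilde{A}Q)\le m_1+m_2$, whereas you perform the same count with an explicit selection-matrix factorization $C_{\mathcal{T}}\tilde{A}=GS$, which dispenses with Cayley--Hamilton entirely.
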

	\begin{proof}[Proof of Lemma~\ref{nec}]
	Suppose we have a numerical realization $(\tilde{A},\tilde{B})$. By Cayley-Hamilton theorem, 
	\begin{equation}\label{necc}\small
	\begin{aligned}
	\textrm{rank}(C_{\mathcal{T}}[\tilde{B},\tilde{A} Q(\tilde{A},\tilde{B})])&=\textrm{rank}(C_{\mathcal{T}}[\tilde{B},\tilde{A}\tilde{B},\dots,\tilde{A}^{n-1}\tilde{B},\tilde{A}^n\tilde{B}])\\
	&=\textrm{rank}([C_{\mathcal{T}}Q(\tilde{A},\tilde{B}),C_{\mathcal{T}}\tilde{A}^n\tilde{B}])\\
	&=\textrm{rank}(C_{\mathcal{T}}Q(\tilde{A},\tilde{B})).
	\end{aligned}
	\end{equation}
In $\mathcal{D}(\bar{A},\bar{B})$, let $m_1,m_2$ be the number of input, state vertices in $\mathcal{N(X_T)}$, respectively. Then, (\ref{necc}) yields,
	\begin{equation*}\label{neccc}\small
	\begin{aligned}
	\textrm{rank}(C_{\mathcal{T}}Q(\tilde{A},\tilde{B}))&=\textrm{rank}(C_{\mathcal{T}}[\tilde{B},\tilde{A}Q(\tilde{A},\tilde{B})])\\
	&\le\textrm{rank}(C_{\mathcal{T}}\tilde{B})+\textrm{rank}(C_{\mathcal{T}}\tilde{A}Q(\tilde{A},\tilde{B}))\\
	&\le m_1+\min (\textrm{rank}(C_{\mathcal{T}}\tilde{A}),\ \textrm{rank}(Q(\tilde{A},\tilde{B})))\\
	&\le m_1+m_2\\
	&=|\mathcal{N(X_T)}|.
	\end{aligned}
	\end{equation*}
	This completes the proof.
	\end{proof}
\begin{proof}[Proof of Theorem~\ref{theo3}] To show the necessity of the theorem, suppose that there exists a vertex $x_{i} \in \mathcal{X}$ that is not input-reachable, then the $i$-th row of controllability matrix will be zero row, which implies that $\textrm{rank}(Q(\tilde{A},\tilde{B}))<n$, for any numerical realization of the pair $(\bar{A},\bar{B})$. On the other hand, suppose there exists a set $\mathcal{S}\subseteq \mathcal{X}$, such that $\left|\mathcal{N(S)}\right|<\left|\mathcal{S}\right|$, then by Lemma~\ref{nec}, $\textrm{rank}(Q(\tilde{A},\tilde{B}))<n$, for any numerical realization of the pair $(\bar{A},\bar{B})$. Hence, the necessity is proved.
	
	To show the sufficiency, we proceed as follows. First, since $\left|\mathcal{N}(S)\right|\ge \left|\mathcal{S}\right|$, $\forall \mathcal{S}\subseteq\mathcal{X}$, it follows from Corollary~\ref{coro} that $\textrm{g--rank}([\bar{A},\bar{B}])=n$. Because all the state vertices are input-reachable, $(\bar{A},\bar{B})$ is irreducible. If we denote the term-rank of $\bar{A}$ as $k$, then by Lemma~\ref{theo2}, {there exists a proper variety $V\subset\mathbb{R}^{n_{\bar{A}}+n_{\bar{B}}}$ such that, if  $[\mathbf{p}_{\tilde{A}},\mathbf{p}_{\tilde{B}}]\in V^c$ then $\tilde{A}$ has $k$ nonzero, simple and controllable modes.} Let $\lambda$ be an eigenvalue of $\tilde{A}.$ On one hand, if $\lambda \ne 0$, then $\lambda$ is controllable by Lemma~\ref{theo2}. On the other hand, if $\lambda=0$, since $\textrm{g--rank}([\bar{A},\bar{B}])=n,$ then there exists a proper variety $W\subset\mathbb{R}^{n_{\bar{A}}+n_{\bar{B}}}$, such that if $[\mathbf{p}_{\tilde{A}},\mathbf{p}_{\tilde{B}}]\in W^c\cap V^c$, then $\textrm{rank}([\tilde{A},\tilde{B}])=n.$ As a result, $\lambda=0$ is controllable by the eigenvalue PBH test. Since all the modes of $\tilde{A}$ are controllable generically, $(\bar{A},\bar{B})$ is structurally controllable.
\end{proof}

\subsection{Proof of Theorem~\ref{theo4}}
\begin{proof}
	The necessity of Conditions \emph{1)} and \emph{2)} can be proved in a similar approach as the proof in Theorem~\ref{theo3}. What remains to be shown is their sufficiency. {It suffices to show that Conditions \emph{1)} and \emph{2)} result in that generically the left null space of target controllability matrix is trivial. }
	
	Suppose there exists an input-unreachable state vertex ${x_i}\in\mathcal{X}\setminus\mathcal{X_T}$. Since all the vertices in $\mathcal{X_T}$ are input-reachable, for $\forall {x_j}\in\mathcal{X_T}$, there is no path from ${x_j}$ to ${x_i}$, and there is also no path from ${x_i}$ to ${x_j}$ due to the symmetry in $\mathcal{D}(\bar{A})$. This implies in model (\ref{eq:1}) that the $i$th state has no impact on the dynamics of $\mathcal{T}$ corresponding states. Omitting the $i$th state from the system will not change the dynamics of $\mathcal{T}$ corresponding states. Hence, we could assume that $(\bar{A},\bar{B})$ is irreducible. By Lemma~\ref{theo2}, there exists a proper variety $V\subset\mathbb{R}^{n_{\bar{A}}+n_{\bar{B}}}$, such that if $[\mathbf{p}_{\tilde{A}},\mathbf{p}_{\tilde{B}}]\in V^c$, then all the nonzero modes of $\tilde{A}$ are controllable. In the rest of the proof, we assume $[\mathbf{p}_{\tilde{A}},\mathbf{p}_{\tilde{B}}]\in V^c$. Denote by $e_1,\dots,e_l$ the left eigenvectors corresponding to zero modes of $\tilde{A}$, and $e_{l+1},\dots,e_n$ the left eigenvectors for nonzero modes. Denote the left null space of a matrix $M$ as $\boldsymbol{N}(M^\top)$.
	
	From Lemma~\ref{theo2}, we have that if $[\mathbf{p}_{\tilde{A}},\mathbf{p}_{\tilde{B}}]\in V^c$, then $\boldsymbol{N}((Q(\tilde{A},\tilde{B}))^\top)\subseteq \textrm{span}\{e_1^\top,\dots,e_l^\top\}$. For the target set $\mathcal{T}$, define the matrix $C_{\mathcal{T}}$ according to (\ref{CT}). By the assumption $\left|\mathcal{N(S)}\right|\ge\left|\mathcal{S}\right|$, $\forall \mathcal{S}\subseteq \mathcal{X_T}$, and Corollary~\ref{coro}, we have that $\textrm{g--rank}(C_{\mathcal{T}}[\bar{A},\bar{B}])=\left|\mathcal{T}\right|$, which implies that there exists a proper variety $W\subset\mathbb{R}^{n_{\bar{A}}+n_{\bar{B}}}$, such that if $[\mathbf{p}_{\tilde{A}},\mathbf{p}_{\tilde{B}}]\in V^c\cap W^c$, then $\textrm{rank}(C_{\mathcal{T}}[\tilde{A},\tilde{B}])=|\mathcal{T}|$, i.e., $\boldsymbol{N}((C_{\mathcal{T}}[\tilde{A},\tilde{B}])^\top)=\mathbf{0}$. Define $\hat{I}\in\mathbb{R}^{n\times n}$ as 
	\begin{equation}\small\label{identity}
	[\hat{I}]_{ij}= 
	\begin{cases}
	1, & \text{ if } j=i,\ i\in\mathcal{T},\\
	0, & \text{otherwise.}
	\end{cases}
	\end{equation}
	We claim that there does not exist a nontrivial vector $e\in\mathbb{C}^n$ such that $\hat{I}e=e$, $e^\top\tilde{A}=0e^\top$ and $e^\top\tilde{B}=\mathbf{0}$. Otherwise, $e^\top[\tilde{A},\tilde{B}]=\mathbf{0}$, which contradicts $\boldsymbol{N}((C_{\mathcal{T}}[\tilde{A},\tilde{B}])^\top )=\mathbf{0}$.
	
	Hence, if $[\mathbf{p}_{\tilde{A}},\mathbf{p}_{\tilde{B}}]\in V^c\cap W^c$, then there is no nontrivial vector $v\in\mathbb{C}^{|\mathcal{T}|}$, such that $v^\top C_{\mathcal{T}}\in\textrm{span}\{e^\top_1,\dots,e^\top_l\}$. 
	Thus, generically, $\boldsymbol{N}((C_{\mathcal{T}}Q(\tilde{A},\tilde{B}))^\top)=\mathbf{0}$. The $(\bar{A},\bar{B})$ is structurally target controllable with respect to~$\mathcal{T}$.
\end{proof}


\bibliographystyle{IEEEtran}
{\small\bibliography{reference}}	
\end{document}